\begin{document}
\title{\bf Turbulent flows  as generalized Kelvin-Voigt materials: modeling and analysis}
\author[1]{Cherif Amrouche} 
\author[2]{Luigi C. Berselli} 
\author[3]{Roger Lewandowski}
\author[4]{Dinh Duong Nguyen}
\affil[1]{Laboratoire de Math\'ematiques et leurs Applications, UMR CNRS 5142, Universit\'e de Pau et des Pays de
  l'Adour, France, E-mail:
  cherif.amrouche@univ-pau.fr} 
\affil[2]{ Universit\`a di Pisa, Dipartimento di Matematica, Via Buonarroti~1/c, I-56127
  Pisa, Italy, E-mail: luigi.carlo.berselli@unipi.it} 
\affil[3,4]{IRMAR, UMR CNRS 6625, University of Rennes 1 and FLUMINANCE Team, INRIA
  Rennes, France
  \\
  E-mail: Roger.Lewandowski@univ-rennes1.fr, dinh-duong.nguyen@univ-rennes1.fr}
\date{}
\maketitle
\begin{abstract} 
  We model a 3D turbulent fluid, evolving toward a statistical equilibrium, by adding to
  the equations for the mean field $(\vv, p)$ a term like $-\alpha \div (\ell(\x) D
  \vv_t)$. This is of the Kelvin-Voigt form, where the Prandtl mixing length $\ell$ is not
  constant and vanishes at the solid walls. We get estimates for velocity $\vv$ in
  $L^\infty_t H^1_x \cap W^{1,2}_t H^{1/2}_x$, that allow us to prove the existence and
  uniqueness of a regular-weak solutions $(\vv, p)$ to the resulting system, for a given
  fixed eddy viscosity. We then prove a structural compactness result that highlights the
  robustness of the model. This allows us to pass to the limit in the quadratic source
  term in the equation for the turbulent kinetic energy $k$, which yields the existence of
  a weak solution to the corresponding Reynolds Averaged Navier-Stokes system satisfied by
  $(\vv, p, k)$.
\end{abstract} 
{\bf Key words} : Fluid mechanics, Turbulence models, Navier-Stokes Equations, Turbulent
Kinetic Energy.  
\smallskip

\noindent {\bf 2010 MSC:}  76D05, 35Q30, 76F65, 76D03, 35Q30. 
\section{Introduction}
The purpose of this paper is to model incompressible turbulent flows as generalized
viscoelastic materials involving the Prandtl mixing length $\ell$ (see
in~\cite{MR2605601}), to show the existence and uniqueness of regular-weak solutions to
the resulting  system  of Partial Differential Equations (PDE),
\begin{equation}
\label{eq:Voigt}
\left\{  
    \begin{aligned}
        \vv_{t} - \alpha \div (\ell (\x) D \vv_t) + \div (\vv \otimes \vv) - \nu\Delta \vv -
        \div(\nut D \vv) + \nabla 
        p &= \fv, \qquad       
        \\
        \nabla\cdot \vv &= 0, \qquad 
    \end{aligned}
\right.
\end{equation} 
for a given turbulent viscosity (eddy viscosity) $\nut$. We then study the existence weak solutions to the
corresponding NSTKE\footnote{RANS = Reynolds Averaged Navier-Stokes. NSTKE =
  Navier-Stokes-Turbulent-Kinetic-Energy. NSTKE model is a specific RANS model.} system,
\begin{equation} \label{eq:Voigt_NSTKE}
\left\{  
\begin{array}{l}
  \vv_{t} - \alpha \div (\ell (\x) D \vv_t) + \div (\vv \otimes \vv) - \nu\Delta \vv-
  \div(\nut(k) D \vv) + \nabla 
  p = \fv, \qquad
  \\      
  \nabla\cdot \vv = 0, 
  \\
  k_t + \vv \cdot \g k - \div( \mut (k) \g k)  =  \nut(k) | D \vv |^2  -  (\ell + \eta)^{-1} k \sqrt{|k|},
  \end{array}
\right.
\end{equation} 
where, to fix the notation,
\begin{itemize} 
\item $\vv$ is the mean velocity\footnote{Usually, the mean velocity is denoted by
    $\vm$. Throughout the paper we omit the over-line for simplicity, except in
    Section~\ref{sec:modelling}, devoted to turbulence modelling.}, $\zoom \vv_t =
  {\partial \vv \over \partial t}$;
\item $D \vv = {1 \over 2} (\g \vv + \g \vv^t)$ is the deformation stress;
\item $p$ is the mean pressure;
\item $k$ is the Turbulent Kinetic Energy (TKE);
\item $\nu >0$ is the  kinematic viscosity, $\nut$ the eddy viscosity;
\item $\mu_t$ is the eddy diffusion and $\eta >0$ is a small constant;
\item the length scale $\alpha$ is that of the boundary layer, given by the relation 
\begin{equation} 
\label{eq:alpha} \alpha =  {\nu \over u_\star},
\end{equation}
here $u_\star$ is the so called friction velocity (see~\cite{CL14});
\item  $\fv$ is a given source term. 
\end{itemize}
As usual, the systems are set in a bounded Lipschitz domain $\Om \subset \R^3$. The mixing
length $\ell = \ell (\x) \ge 0$ is defined over $\Om$ and, according to well known
physical laws (see~\eqref{eq:Prandtl_Obukhov} and~\eqref{eq:van_driest} below), $\ell \in
C^1(\overline \Om)$ and vanishes at the boundary $\Ga = \p \Om$ as follows:
\begin{equation} 
\label{eq:law_for_ell_1} 
\ell (\x) \simeq d(\x, \Ga) = \rho(\x), \quad \hbox{ when } \quad \x \to \Ga, \, \x \in
\Om,
\end{equation} 
where $d(\x, \Ga)$ denotes the distance of the point $\x$ from the boundary.

Model~\eqref{eq:Voigt} is close to viscoelastic materials models, given by the
Kelvin-Voigt relation: 
\begin{equation} 
  \label{eq:law_KV} 
  {\boldsymbol \sigma} = E\, {\boldsymbol \E} + \eta\, {\boldsymbol\E}_t, 
\end{equation}
where ${\boldsymbol \sigma}$ denotes the Cauchy stress tensor and ${\boldsymbol \E}$ the
strain-rate tensor. In this case, $E$ is the modulus of elasticity and $\eta$ the
viscosity (see for instance Germain~\cite{MR0154462} or Gurtin~\cite{MR636255}). In fluid
mechanics, ${\boldsymbol \E} = D\vv$, and this model is used to describe some non
Newtonian fluids, such as lubricants. For such flows, the law~\eqref{eq:law_KV} becomes
\begin{equation*} 
{\boldsymbol \sigma} = - p\, {\rm Id} + \mu D \vv +
\gamma^2 D \vv_t, 
\end{equation*} that yields the incompressible Navier-Stokes-Voigt equations:
\begin{equation}
  \label{eq:NSVE} 
  \left\{
    \begin{aligned}
      \vv_{t} - \gamma^2 \Delta \vv_t + \div (\vv \otimes \vv) -\nu\Delta \vv+\nabla
      p &= \fv,       
      \\ 
    \nabla\cdot \vv &= 0.
  \end{aligned}
\right.
\end{equation} 
Mathematical investigations about system~\eqref{eq:NSVE} were first carried out by
A. Oskolkov, who proved the existence and uniqueness of weak and strong solutions in some
particular sense, see~\cite{MR0377311, MR579485}. Then, several mathematical problems
raised by~\eqref{eq:NSVE} have been studied by Titi {\sl et al.}~\cite{MR2570790,
  MR2655910, MR2629486}, making a clear relation between Navier-Stokes-Voigt and
turbulence modeling. In addition, in~\cite{MR2660874} Larios \& Titi showed the
connection between the Navier-Stokes-Voigt equations and the simplified Bardina's model
introduced by Layton \& Lewandowski~\cite{MR2172198}, designed as a Large-Eddy simulation
model. In Berselli, Kim, and Rebholz~\cite{BKR2016} an interpretation of the
Navier-Stokes-Voigt equations in terms of approximate deconvolution models is also given.
\medskip

In this paper we connect the Prandtl-Smagorinsky's model to the Turbulent Kinetic Energy
(TKE) model to calculate the eddy viscosity $\nut$. To make it clear, let $\reyn$ denotes
the Reynolds stress. We will show how, combining the energy inequality with the equation
satisfied by $k$ (without any closure assumption), we are led to set --in certain specific
regimes, such as the convergence to stable statistical states
see~\eqref{eq:transfer_comp}-- the following constitutive law
\begin{equation*} 
  \reyn = - \alpha \ell\, D  \vv_t - \nut D \vv + {2 \over 3} k\, {\rm Id}, 
\end{equation*} 
instead of the usual one
\begin{equation*}
    \reyn = - \nut D \vv + {2 \over 3} k\, {\rm Id}.
  \end{equation*}
  This yields the PDE system~\eqref{eq:Voigt} including the term $-\alpha \div (\ell D
  \vv_t)$, and then also the NSTKE system~\eqref{eq:Voigt_NSTKE} after having performed
  the usual closure procedure about $k$, where $\nut = \nut(k) = \ell \sqrt k$.
  \medskip

  Turning to the analysis of the systems, we observe that according to
  assumption~\eqref{eq:law_for_ell_1} about the mixing length $\ell$, the additional
  generalized Kelvin-Voigt term $- \alpha \div (\ell D \vv_t)$ enforces for the equations
  a natural functional structure in the space $H^{1/2} (\Om)= [H^1(\Om), L^2(\Om)]_{1/2}$,
  cf.~Lions \& Magenes~\cite{MR0350177}, which is a critical scaling-invariant space for
  the Navier-Stokes equations.  In particular, we obtain for the velocity sharp estimates
  in $W^{1,2} (0,T; H^{1/2}(\Om)^3)$, as well as in $L^\infty(0,T; H_0^1(\Om)^3)$. We are
  then able to prove the existence and uniqueness of regular-weak solution
  to~\eqref{eq:Voigt} (see Theorem~\ref{thm:existence} and
  Remark~\ref{rem:generalisation}).  \medskip

  However, we believe that the most interesting result of this paper is the compactness
  result we prove in Lemma~\ref{thm:compact}. We consider an eddy viscosities sequence
  $(\nut^n)_{n \in \N}$ which is bounded in $L^\infty ([0, \infty[ \times \Om)$ and in
  addition converges {\sl a.e.}  to $\nut$ in $[0, \infty[ \times \Om$ as $n \to
  \infty$. We also show that the corresponding regular-weak sequence of solution
  $(\vv^n)_{n \in \N}$ converges, in some sense, to the regular-weak solution $\vv$ of the
  limit problem with $\nut$ as eddy viscosity. Moreover, we get the convergence of the
  energies, that is $\nut^n | D \vv^n|^2 \to \nut | D \vv |^2$ in the sense of the
  measures.  \medskip

  This compactness result allows us to prove the existence of a solution to the
  NSTKE-Voigt system~\eqref{eq:Voigt_NSTKE} (see Theorem~\ref{thm:existence_NSTKE}
  below). We stress that the usual system coupling $\vv$, $p$. and $k$ only yields a
  variational inequality for $k$ when passing to the limit in the equations, because of
  the lack of strong convergence of the energies (see~\cite{CL14, MR1418142}). This
  observation makes Theorem~\ref{thm:existence_NSTKE} a very interesting and original
  result.  \bigskip

  \textbf{Plan of the paper.} The paper is organized as follows:
  Section~\ref{sec:modelling} is devoted to modeling and to explain the motivations for
  the systems of PDE we study. Then, in Section~\ref{sec:functional-setting} we use
  functional analysis and interpolation theory to provide estimates in various spaces,
  especially in $H^{1/2}(\Omega)$. The proof of the existence and uniqueness
  results for the generalized Navier-Stokes-Voigt equations~\eqref{eq:Voigt}
  and~\eqref{eq:Gen-VoigtNSE} is developed in
  Section~\ref{sec:generalized-Voigt}. Finally, the compactness result and analysis of the
  NSTKE-Voigt system is performed in Section~\ref{sec:TKE}.

  \section{Kelvin-Voigt modeling for turbulent flows} \label{sec:modelling} 
  In this section (and only in this section) $\vv$ and $p$ denote the velocity and
  pressure of the fluid respectively (and not the mean fields unlike in the rest of the
  paper). Hence, the couple $(\vv,p)$ solves the Navier-Stokes equations,
\begin{equation} \label{eq:NSE}
\left\{  
    \begin{aligned}
        \vv_{t} + \div (\vv \otimes \vv) -\nu\Delta \vv+\nabla
        p &= \fv, \qquad       
        \\
        \nabla \cdot \vv &= 0. \qquad 
    \end{aligned}
\right.
\end{equation} 
We first recall some results about basic turbulence modeling to derive the equation for
the mean $\vm$ and to define the Reynolds stress $\reyn$. Then, we show how --when
simultaneously using the Prandtl-Smagorinsky and the turbulent kinetic energy models and
the equation satisfied by the TKE-- we get the additional term $-\alpha \div (\ell
D\vm_t)$ in the equation for $\vm$ in specific regimes, such as the convergence to a
statistical equilibrium (see Remark~\ref{rem:stable_conv}).  \medskip

We wish to mention that a very close modeling process has been previously performed in
Yao, Layton, and Zhao~\cite{RLZ18}. The latter paper gave to us some inspiration for this
modelling procedure we develop here. One main difference is that we study the TKE
equations, while in their paper, Yao, Layton, and Zhao considered a rotational structure,
without involving the equation for the turbulent kinetic energy. Moreover, they were
looking at back-scatter terms, so that our point of view and interpretation are --at the
very end-- rather different.
\subsection{Recalls of basic turbulence modeling} 
According to the Reynolds decomposition, $\vv$ and $p$ are decomposed as the sum of their
mean and fluctuation
 \begin{equation*}
\vv = \vm + \vv', \quad \text{and}\quad p = \plm + p',
\end{equation*}
where the mean filter is linear, commutes with any differential operator (namely
$D\overline \psi = \overline {D \psi}$), and it is idempotent (that is
$\overline{\overline \psi} = \overline \psi$). From this, one gets the relation
 \begin{equation*}
\overline{ \vv \otimes \vv } = \vm
 \otimes \vm + \reyn,
\end{equation*}
where the Reynolds stress $\reyn$ is given by 
\begin{equation*}
\reyn = \overline {\vv' \otimes \vv' }.
\end{equation*}
Therefore, applying the mean operator to the NSE~\eqref{eq:NSE} yields
\begin{equation} \label{eq:NSEM}
\left\{  
    \begin{aligned}
        \vm_{t}+ \div (\vm\otimes \vm) - \nu\Delta \vm + \div \reyn +\nabla
        \plm &= \overline{\fv}, \qquad       
        \\
        \nabla \cdot \vm &= 0. \qquad 
    \end{aligned}
\right.
\end{equation} 
To ``close''~\eqref{eq:NSEM}, one must express $\reyn$ in terms of mean quantities. As we
already said in the introduction, the Boussinesq assumption~\cite{Bou1877} yields
\begin{equation*}
\reyn = - \nut D \vm + {2 \over 3} k\, {\rm Id},
\end{equation*}
where we recall that $\nut$ is the eddy viscosity, $k = {1 \over 2} \overline{ | \vv'
  |^2}$ the turbulent kinetic energy (TKE), and $D\vv = {1 \over 2}(\g \vv + \g \vv^T)$
the deformation tensor.  \medskip

The challenge in turbulence modelling is the determination of $\nut$. In what follows, we
combine the Prandtl-Smagorinsky's model with
\begin{equation}
\label{eq:smago} \nut = \ell \sqrt{\alpha \ell} | D\vm |,
\end{equation}
where the boundary layer length scale $\alpha$ is given by~\eqref{eq:alpha},
and the NSTKE model with $\nut$ is given by
\begin{equation}
\label{eq:eddy_k}  
\nut = \ell \sqrt {k}.
\end{equation}

Dimensionless constants may be involved in the above equations. We have set them equal to
1 for the sake of simplicity.  Both models involve the Prandtl mixing length $\ell$. In
the case of a flow over a plate $\Om = \R^2 \times \{ z > 0 \}$, one finds in
Obukhov~\cite{MR0023169} the following law
\begin{equation} \label{eq:Prandtl_Obukhov} \ell = \ell (z) = \kappa z, \end{equation}
where $\kappa \in [0.35, 0.42]$ is  the Von K\'arm\'an constant. A more sophisticated
formula, however very popular especially for the use in the computation of the turbulent
channel flow, can be found in Van Driest~\cite{VD56}:
\begin{equation} \label{eq:van_driest} \ell = \ell (z) = \kappa z (1- e^{-z/A}), \end{equation} 
where $A$ depends on the oscillations of the plate and $\nu$. Alternative formulas are
provided in~\cite{LPMC18}. In all cases, $\ell$ satisfies the
law~\eqref{eq:law_for_ell_1}.
\subsection{Modelling process} 
We start from the natural energy inequality deduced from the equation~\eqref{eq:NSEM} by
usual integration by parts, at
any time positive time $t$\footnote{We use $\|\cdot\|$ for the $L^2$-norm in this section.}, 
\begin{equation}
  \label{eq:ine_energy} 
  {1 \over 2} {d \over dt} \| \vm (t) \|^2 +
  \nu \| \g \vm (t) \|^2 + \langle \div \reyn, \vm (t) \rangle \le \langle \overline \fv
  (t), \vm (t) \rangle.
\end{equation}
We aim to evaluate the contribution of the term 
\begin{equation*}
\mathscr{T}(t) = \langle \div \reyn,
\vm (t) \rangle,
\end{equation*}
which will be deduced from the equation satisfied by $k$ (see~\cite[Sec.~4.4.1]{CL14})
\begin{equation*}
  \p_t k
  + \vm \cdot \g k + \div \overline{e' \vv'} = - \reyn : \g \vm - \E + \overline{ \fv' \cdot
    \vv'},
\end{equation*}
where $e = k + e' = {1 \over 2} |\vv'|^2$ denotes the kinetic energy of the fluctuations,
and $\E$ is the turbulent dissipation,
\begin{equation*}
\E := \nu \overline{ | D \vv' |^2}.
\end{equation*}
Integrating formally this equation in space, leaving apart eventual boundary condition
issues, leads to 
\begin{equation*}
{d \over dt} \int_\Om k(t) = \mathscr{T}(t) - \int_\Om \E(t) +
\overline{\langle \fv' , \vv' \rangle},
\end{equation*}
that we insert in the inequality~\eqref{eq:ine_energy} to obtain 
\begin{equation}
\label{eq:energy_2} \langle \vm_t,
\vm \rangle + {d \over dt} \int_\Om k+ \nu \| \g \vm (t) \|^2 + \int_\Om \E (t) \le
\langle \overline \fv (t), \vm (t) \rangle + \overline{\langle \fv'(t) , \vv' (t)\rangle}.
\end{equation}
In order to eliminate the term $\zoom {d \over dt} \int_\Om k$ from~\eqref{eq:energy_2},
we enforce equality between the Prandtl-Smagorinsky's model~\eqref{eq:smago} and the NSTKE
one~\eqref{eq:eddy_k}, which leads to the closure equality
\begin{equation}
\label{eq:closure_assumption1} k = \alpha\ell | D \vm |^2.
\end{equation}
Then, by using~\eqref{eq:closure_assumption1}, we get the formal identity
\begin{equation}
  \label{eq:closure_assumption} 
  {d \over dt} \int_\Om k = \alpha {d \over dt} \int_\Om
  \ell | D \vm |^2 = -\langle \alpha \div ( \ell D \vm_t), \vm \rangle.
\end{equation}
Finally, we combine~\eqref{eq:energy_2} with~\eqref{eq:closure_assumption}, which leads to
the inequality
\begin{equation}
\label{eq:energy_3} 
\langle \vm_t - \alpha \div (\ell D \vm_t) - \nu
\Delta \vm, \vm \rangle + \int_\Om \E (t) \le \langle \overline \fv (t), \vm (t) \rangle +
\overline{\langle \fv'(t) , \vv' (t)\rangle}.
\end{equation}
Combining~\eqref{eq:ine_energy} to~\eqref {eq:energy_3}, suggests to put
\begin{equation*}
  \reyn = - \alpha \ell D \vm_t - \nut D \vm + {2 \over 3} k {\rm    Id},
\end{equation*}
and yields the following energy inequality 
\begin{equation} 
  \label{eq:ine_energy_5} 
  {1 \over 2} {d \over dt} ( \| \vm (t) \|^2
  +\alpha \| \sqrt \ell D \vm \|^2) + \nu \| \g \vm (t) \|^2 + \| \sqrt { \nut } D \vm
  \|^2 \le \langle \overline \fv (t), \vm (t) \rangle.  
\end{equation} 
Comparing inequalities~\eqref{eq:ine_energy_5} and~\eqref{eq:energy_3}, we see that all
this makes sense when: 
\begin{equation}
  \label{eq:transfer_comp}
  \| \sqrt { \nut } D \vm \|^2 + \overline{\langle
    \fv'(t) , \vv' (t)\rangle} \le \| \sqrt {\E (t)} \|^2,
 \end{equation} 
 and in this case the system satisfied by $\vm$ becomes
\begin{equation*} 
  \left\{  
    \begin{aligned}
      \vm_{t}- \alpha \div (\ell D \vm_t) + \div (\vm\otimes \vm) - \nu\Delta \vm  - \div
      (\nut D \vm ) +\nabla 
      \left   (\plm +{2 \over 3}  k \right ) &= \overline \fv, \qquad       
      \\
      \nabla\cdot \vm & = 0. \qquad 
    \end{aligned}
  \right.
\end{equation*} 

\begin{Remark} 
  When $\ell$ is constant and equal to $2 \alpha$ (to set ideas), and as
  $\div \vm_t = 0$, we have $\alpha \div (\ell D \vm_t) = \alpha^2 \Delta
  \vm_t$. Therefore, we get by this way the Kelvin-Voigt term involved in
  Equation~\eqref{eq:NSVE}.
\end{Remark}
\begin{Remark}
  \label{rem:stable_conv}  
  Condition~\eqref{eq:transfer_comp} asks for comments. To see if it may happen, let us
  take a constant source term $\fv (t) = \fv$, without turbulent fluctuation, which means
  $\fv' =\mathbf{ 0}$. In this case relation~\eqref{eq:transfer_comp} simplifies to
\begin{equation}
\label{eq:transfer_comp_2}
  \| \sqrt { \nut } D \vm \|^2 \le \| \sqrt {\E (t)} \|^2 .
\end{equation}
The usual closed equation for $k$ is
\begin{equation*}
  k_t + \vm \cdot \g k - \div (\mut \g k) = \nut | D \vm |^2 - \E,
\end{equation*} giving,
while ignoring possible boundary conditions, 
\begin{equation*}
  {d \over dt} \int_\Om k = \| \sqrt {\nut } D \vm \|^2 - \| \sqrt {\E (t)} \|^2.
\end{equation*}
Therefore,~\eqref{eq:transfer_comp_2} indicates a decrease of TKE, which means a
decrease of the turbulence, towards a laminar state, or a stable statistical
equilibrium, such as a grid turbulence.
\end{Remark}

\section{Functional setting and estimate}
\label{sec:functional-setting}
The analysis of system~\eqref{eq:Voigt} yields immediately standard a priori estimates in
$L^\infty_t L^2_x$ and $L^2_t H^1_x$, taking the solution itself as test function. The
question is whether the Voigt term $-\alpha \div (\ell D \vv_t)$ provides additional
regularity. The issue is the degeneration of the mixing length $\ell$ at the boundary,
according to~\eqref{eq:law_for_ell} below. The purpose of this section is to derive from
the interpolation theory a general estimate, that will enable us to show later that the
term $-\alpha \div (\ell D \vv_t)$ yields additional $W^{1,2}(0,T; H^{1/2})$ and
$L^\infty(0,T;H^1)$ regularity.

\subsection{Framework and preliminaries} 
As usual in mathematical fluid dynamics, we use the following spaces,
\begin{equation*}
    \begin{aligned}
      {\cal V} &= \left\{ \boldsymbol{\varphi} \in {\cal D}(\Om)^3, \, \, \g \cdot
        \boldsymbol{\varphi} = 0 {\; \; \mathrm{in}\,\, \Omega}\right\},
      \\
      H &= \left\{ \vv \in L^2(\Omega)^3, \ \nabla \cdot \vv = 0 {\; \; \mathrm{in}\,\,
          \Omega},\ \vv \cdot \mathbf{n} = 0 \text{ on }\Gamma\right\},
      \\
      V &= \left\{\vv\in H^1_{0}(\Omega)^3, \, \nabla \cdot \vv = 0 {\; \; \mathrm{in}\,\,
          \Omega}\right\},
    \end{aligned}
\end{equation*}
and we recall that ${\cal V}$ is dense in $H$ and $V$ for their respective
topologies~\cite{GR86}. Here $L^2(\Omega)$ and $H^1_0(\Omega)$ stand for the usual
Lebesgue and Sobolev spaces.  \medskip

Throughout the rest of the paper, the mixing length $\ell = \ell (\x) \in C^1 (\overline
\Om)$ is such that
\begin{equation} \label{eq:law_for_ell} 
\left\{
    \begin{array}{l} 
      \forall \, K  \subset \Om, \quad K \hbox{ compact}, \quad \zoom  \inf_K \ell >0, 
      \\
      \ell (\x ) \simeq d(\x, \Ga) = \rho(\x), \quad \hbox{ when} \quad \x \to \Ga, \,
      \text{for }\x \in \Om.
    \end{array} 
\right. 
\end{equation}
According to~\eqref{eq:law_for_ell}, we recall that 
\begin{equation*}
  H^{1 / 2} (\Om) = [H^1(\Om) , L^2(\Om)]_{1/2},
\end{equation*}
and also 
\begin{equation*} 
H^{1 / 2}_{00} (\Om) = [\huo, L^2(\Om)]_{1/2} = \left\{ u \in
    H^{1/2}(\Om), \, \hbox{ s.t. } \,\, \ell^{-{1/2}} u \in L^2(\Om) \right\}, 
\end{equation*}
cf.~\cite[Ch.~1]{MR0350177}.
In the following we will consider the following Hilbert space
 {
\begin{equation*} 
 V_{1/2} = \left\{ \vv \in H^{1/2} (\Om)^3\, ; \, \, \div \vv =
  0  \; \mathrm{in}\,\,  \Omega \quad\mathrm{and}\quad \vv \cdot \mathbf{n} =
        0 \text{ on }\Gamma \right\} 
\end{equation*}
equipped with the norm of $H^{1/2} (\Om)^3$.
}

Finally, we know that when $\Om$ is connected, the operator $D = \frac{ \g + \g^t}{2}$ is
well defined over $H^s(\Om)^3$ whatever $s \ge 0$; next (see~\cite{MR600655}),
\begin{equation*}
  K := \hbox{Ker}\,  D = \left\{ \vv \in H^s(\Om)^3 \, \hbox{ s.t.} \,\, \exists \, ({\bf a},
    {\bf b}) \in \R^3 \times \R^3; \, \, \vv (\x) = {\bf b} \times \x + {\bf a}  \right\}, 
\end{equation*}
and we recall the following Korn inequality
\begin{equation} 
  \label{eq:ine_norms} 
  \forall \, \vv \in H^1(\Om)^3, \quad \| \vv \|_{ H^1 (\Om)^3/K }\le C \| D\vv \|_{L^2 (\Om)^9},
\end{equation}
where for any given Banach space $B$ and any closed subspace $E\subset B$, $B/E$ denotes
the quotient space. Moreover, for any $ \vv \in H^1_0(\Om)^3,$ we have $ \| \vv \|_{ H^1
  (\Om)^3 }\le C \| D\vv \|_{L^2 (\Om)^9}, $ because in this case the kernel $K$ is
reduced to ${\bf 0}.$
\subsection{Main general estimate}
We deduce now the most relevant inequality, which derives from the generalized Voigt model
when using the solution itself as test function.
\begin{theorem} 
  \label{thm:main_estimate}
  Let $\vv \in \mathcal{D}'(\Omega)^3$ such that $\sqrt \ell D \vv \in
    L^2(\Om)^9$. Then $\vv \in H^{1/2}(\Om)^3$ and there exists a constant $C = C(\Om)$
    such that
\begin{equation} 
  \label{eq:norm_V1/2}  
  \|\vv \|_{H^{1/2}(\Om)^3/K} \le C \| \sqrt \ell D \vv\|_{L^2(\Om)^9}. 
\end{equation}
In particular, 
\begin{equation}\label{emb} 
  W = \left\{ \vv \in H; \, \sqrt \ell D\vv \in L^2(\Om)^9 \right\} \hookrightarrow V_{1/2},
\end{equation}
with continuous embedding. 
\end{theorem}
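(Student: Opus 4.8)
The plan is to build everything on the interpolation identity $H^{1/2}(\Om)=[H^1(\Om),L^2(\Om)]_{1/2}$ recalled just above the statement, combined with a localisation that separates the interior of $\Om$, where the hypothesis \eqref{eq:law_for_ell} forces $\ell$ to stay bounded away from $0$, from a neighbourhood of $\Ga$, where the degeneracy $\ell\simeq\rho$ occurs. First I would dispose of the interior: on any compact $K\Subset\Om$ the first line of \eqref{eq:law_for_ell} gives $\inf_K\ell>0$, hence $D\vv\in L^2(K)$ and, by the interior form of Korn's inequality, $\g\vv\in L^2_{\mathrm{loc}}(\Om)$; thus $\vv$ is a genuine function lying in $H^1_{\mathrm{loc}}(\Om)\subset H^{1/2}_{\mathrm{loc}}(\Om)$, and the whole difficulty is concentrated near $\Ga$. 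Using a finite partition of unity subordinate to a covering of $\overline\Om$ by one fixed interior Lipschitz set $\omega\Subset\Om$ and finitely many boundary charts, and using that $H^{1/2}$ (with $1/2\in(0,1)$) is stable under multiplication by $C^1$ cut-offs and under bi-Lipschitz changes of variables, the problem reduces to the model case of the half-space $\R^3_+=\{x_3>0\}$ with weight $\rho\simeq x_3$: one must show that $x_3^{1/2}\g u\in L^2(\R^3_+)$ (for $u$ compactly supported in the tangential variables) implies $u\in H^{1/2}(\R^3_+)$ with the corresponding bound.

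Two reductions precede the model estimate. For the quotient by $K$, I would apply Korn \eqref{eq:ine_norms} on the fixed set $\omega$ to produce a single rigid motion $\boldsymbol r\in K$ with $\|\vv-\boldsymbol r\|_{H^1(\omega)}\le C\|\sqrt\ell D\vv\|_{L^2(\Om)}$; replacing $\vv$ by $\vv-\boldsymbol r$ leaves $D\vv$ unchanged, so it then suffices to bound $\|\vv\|_{H^{1/2}(\Om)}$ by the right-hand side, the subtracted rigid motion accounting exactly for the quotient. For the passage from the symmetric gradient to the full gradient, I would avoid any weighted Korn inequality by applying \eqref{eq:ine_norms} to each localised field $\zeta\vv$: since $D(\zeta\vv)=\zeta D\vv+\operatorname{sym}(\vv\otimes\g\zeta)$ involves only $D\vv$ and the lower-order term $\vv\,\g\zeta$, the rigid motion attached to $\zeta\vv$ is controlled through its values on $\omega$ (all norms on the finite-dimensional $K$ being equivalent), and no gradient of $\vv$ weighted by $\rho$ is ever needed.

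On the half-space I would use the tangential Fourier transform. Writing $x=(x',x_3)$ and $\widehat u(\xi',x_3)$ for the transform in $x'$, the hypothesis reads $Q(\xi'):=\int_0^\infty x_3\bigl(|\p_{x_3}\widehat u|^2+|\xi'|^2|\widehat u|^2\bigr)\,dx_3\in L^1(d\xi')$, while $\|u\|_{H^{1/2}}^2$ is comparable to $\int_{\R^2}\!\int_0^\infty (1+|\xi'|^2)^{1/2}|\widehat u|^2\,dx_3\,d\xi'$ plus the normal seminorm $\int_{\R^2}[\widehat u(\xi',\cdot)]^2_{\dot H^{1/2}(\R_+)}\,d\xi'$. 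The engine is the elementary weighted identity, for $f=\widehat u(\xi',\cdot)$,
\[
\int_0^\infty |f|^2\,dx_3=-2\int_0^\infty x_3\,f f'\,dx_3\le 2\Bigl(\int_0^\infty x_3|f|^2\,dx_3\Bigr)^{1/2}\Bigl(\int_0^\infty x_3|f'|^2\,dx_3\Bigr)^{1/2},
\]
which with $\lambda=|\xi'|$ yields $\lambda\int_0^\infty|f|^2\,dx_3\le 2\,Q(\xi')$ and handles the high-frequency part; the normal seminorm is controlled directly, since $[f]^2_{\dot H^{1/2}(\R_+)}\lesssim\int_0^\infty x_3|f'|^2\,dx_3\le Q(\xi')$ (a Hardy inequality whose scaling matches the weight $x_3$ exactly, the threshold $x_3^\alpha$ belonging to either side iff $\alpha>0$). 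The remaining low-frequency/$L^2$ contribution is absorbed by the $L^2$ norm near $\Ga$, which is itself finite because integrating $f$ from the interior value gives at worst logarithmic growth, $|f(x_3)|\lesssim |f(1)|+(\log(1/x_3))^{1/2}Q(\xi')^{1/2}$, anchored to the interior bound already obtained after subtracting $\boldsymbol r$. Summing over the charts and undoing the partition of unity gives \eqref{eq:norm_V1/2}; the embedding \eqref{emb} is then immediate, as the right-hand side is finite for $\vv\in W$ while $\div\vv=0$ and $\vv\cdot\mathbf n=0$ are preserved.

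The step I expect to be the main obstacle is not the clean one-dimensional inequality above but the surrounding bookkeeping that makes the final bound genuinely modulo $K$. One must check simultaneously that the single rigid motion subtracted on $\omega$ and all the lower-order terms $\vv\,\g\zeta$ produced by the partition of unity are controlled by $\|\sqrt\ell D\vv\|_{L^2}$ \emph{alone}, with no residual $\|\vv\|_{L^2(\Om)}$ surviving; equivalently, one must verify that the anchoring of the boundary-layer $L^2$ norm to the interior closes the loop without circularity. A secondary delicate point is that the weight $x_3$ is borderline and scale-invariant, so the model estimate reproduces exactly $H^{1/2}$ and leaves no slack: the frequency integration must be organised so that low and high frequencies are both absorbed, and the bi-Lipschitz flattening must be controlled to preserve the two-sided comparison $\rho\simeq x_3$ together with the $H^{1/2}$ norm.
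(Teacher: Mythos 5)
Your overall architecture (localisation, boundary flattening, tangential Fourier transform, the weighted one-dimensional identity, and the Hardy-type bound for the normal $\dot H^{1/2}$ seminorm) is a legitimate and genuinely different route from the paper's, and the one-dimensional engine is correct as far as it goes. But there is one step where the argument as written breaks down, and it is exactly the step at which the whole difficulty of the theorem is concentrated: the passage from the \emph{symmetric} gradient to the \emph{full} gradient in the boundary layer. Your half-space model estimate takes as hypothesis $x_3^{1/2}\g u\in L^2(\R^3_+)$, i.e.\ the full weighted gradient, whereas the theorem only provides $\sqrt\ell\, D\vv\in L^2(\Om)^9$. Your proposed bridge --- apply the unweighted Korn inequality \eqref{eq:ine_norms} to each localised field $\zeta\vv$ --- is not available for a cut-off $\zeta$ supported near $\Ga$: since $\ell$ vanishes at the boundary by \eqref{eq:law_for_ell}, the hypothesis allows $|D\vv|\sim\rho^{-1/2}$ near $\Ga$, so $D(\zeta\vv)=\zeta D\vv+\operatorname{sym}(\vv\otimes\g\zeta)$ need not belong to $L^2(\Om)^9$ and \eqref{eq:ine_norms} cannot be invoked. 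What your scheme actually requires is a degenerate-weight Korn inequality of the form $\|\sqrt\rho\,\g\vv\|_{L^2}\lesssim\|\sqrt\rho\, D\vv\|_{L^2}+\text{l.o.t.}$, with $\rho=d(\cdot,\Ga)$; this is a delicate borderline statement (the weight $\rho$ sits at the endpoint of the admissible range for such weighted inequalities) and is not obtainable from \eqref{eq:ine_norms} plus a partition of unity. You would have to prove it.

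The paper sidesteps this obstruction entirely, which is worth understanding: it never converts $D\vv$ into $\g\vv$. It observes that the pairing $\langle D\vv,\boldsymbol{\varphi}\rangle=\int_\Om\sqrt\ell\, D\vv:(\boldsymbol{\varphi}/\sqrt\ell)$, together with the characterisation $H^{1/2}_{00}(\Om)=\{u\in H^{1/2}(\Om):\ \ell^{-1/2}u\in L^2(\Om)\}$, shows that $D\vv\in[H^{1/2}_{00}(\Om)^9]'$ with norm controlled by $\|\sqrt\ell\, D\vv\|_{L^2}$, and then interpolates halfway between the two \emph{unweighted} endpoint inequalities $\|\vv\|_{H^1(\Om)^3/K}\le C\|D\vv\|_{L^2}$ (Korn) and $\|\vv\|_{L^2(\Om)^3/K}\le C\|D\vv\|_{H^{-1}}$ (the vector version of J.-L.~Lions' lemma), using $[L^2(\Om),H^{-1}(\Om)]_{1/2}=[H^{1/2}_{00}(\Om)]'$. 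The Korn-type difficulty is thus paid for only at the two unweighted endpoints, where it is classical, and the degenerate weight enters solely through the identification of $H^{1/2}_{00}$. If you wish to keep your constructive Fourier approach you must either supply the weighted Korn inequality or reorganise the boundary-layer estimate so that it consumes $D\vv$ directly, and it is not clear the tangential Fourier identity survives that. The bookkeeping you flag yourself (anchoring the $L^2$ norm to an interior slice after subtracting one rigid motion, gluing the charts) is fiddly but standard for a connected Lipschitz domain; the symmetric-to-full gradient step is the genuine gap.
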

\begin{proof} 
We argue in two steps. 
\medskip

{\sl Step 1.} Let $\vv \in  \mathcal{D}'(\Omega)^3$ such that $\sqrt \ell D \vv \in
L^2(\Om)^9$ and $\boldsymbol \varphi \in \mathcal{D}(\Omega)^9$. As $\vv \in
H^1_{\mathrm{loc}}(\Omega)^3$, then we have 
\begin{equation*}
  |\langle D\vv, \boldsymbol{\varphi} \rangle| = \left| \int_\Omega \sqrt \ell \, D\vv
    : \dfrac{\boldsymbol{\varphi}}{\sqrt \ell} \right| \leq  C \| \sqrt \ell \, D\vv
  \|_{_{L^2(\Om)^9}} \| \boldsymbol{\varphi} \|_{H^{1/2}_{00}(\Omega)^9}.
  \end{equation*}
  Because of the density of $\mathcal{D}(\Omega)$ in $H^{1/2}_{00}(\Omega)$, this
  shows that $D\vv \in \left[H^{1/2}_{00}(\Omega)^9 \right]'$ with the estimate
\begin{equation} 
  \label{corollary-3}
  \|D \vv \|_{\left[H^{1/2}_{00}(\Omega)^9 \right]'} \leq C \| \sqrt \ell \, D \vv \|_{L^{2}(\Omega)^9}.
\end{equation}
\medskip

{\sl Step 2.} According to ~\cite{MR2247454, MR3742931}, we have
\begin{equation*} 
\forall \vv \in  L^2(\Om)^3,\quad  \| \vv \|_{L^2(\Om)^3/K} \leq C \| D\vv
  \|_{{H}^{-1}(\Omega)^9}.
\end{equation*}
Therefore, we deduce from classical interpolation theorems and from the following
identities  (see in~\cite{MR0350177}), 
\begin{equation*} 
  [H^{1}(\Omega)^3/K, L^2(\Om)^3/K ]_{1/2} =  H^{1/2}(\Om)^3/K, \quad\text{and}\quad [L^2(\Om), H^{-1}
  (\Om) ]_{1/2} =  [H^{1/2}_{00}(\Om)]',  
\end{equation*}
the inequality
\begin{equation*}
  \| \vv \|_{H^{1 / 2} (\Om)^3 /K } \le C  \|D \vv \|_{\left[H^{1/2}_{00}(\Omega)^9 \right]'}.
\end{equation*}
Hence the estimate \eqref{eq:norm_V1/2} follows by using \eqref{corollary-3} and obviously
the embedding \eqref{emb}.  
\end{proof}  
\section{Well-posedness for the generalized Navier-Stokes-Voigt equations}
\label{sec:generalized-Voigt}
In this section we start with the analysis of system~\eqref{eq:Voigt} without any eddy
viscosity, that means $\nut = 0$, both for simplicity of presentation and to highlight the
role of the generalized Voigt term. The resulting system, called generalized
Navier-Stokes-Voigt equations, is the following:
\begin{equation} \label{eq:Gen-VoigtNSE} 
\left \{ 
    \begin{aligned} 
        \vv_t  - \alpha \div (\ell \,  D \vv_t) + (\vv \cdot \nabla)\, \vv - \nu \Delta \vv
        + \nabla p  &= {\bf f} \quad\text{ in }(0, T) \times \Om, 
        \\ 
        \div \vv &= 0 \quad\text{ in }(0, T) \times \Om, 
        \\ 
        \vv \vert_\Ga &= 0 \quad\text{ on }(0, T) \times \Gamma, 
        \\ 
        \vv_{t=0} &= \vv_0 \quad\text{ in }\Om, 
\end{aligned} \right. 
\end{equation}
which is set in $Q_T = (0, T) \times \Om$, where $\Om$ is a given Lipschitz bounded domain
in $\R^3$ with its boundary $\Gamma = \partial \Om$, $T$ a fixed positive
time\footnote{Remind that when $\div \vv = 0$, then $\div (\vv \otimes \vv) = (\vv \cdot
  \g) \vv$. We use either of these forms without necessarily warning, depending on the
  situation.}, and $\ell$ satisfies~\eqref{eq:law_for_ell_1}. The main results of this
section are the existence and uniqueness of regular-weak solutions (see
Definition~\ref{def:reg_w_sol} below), when the initial velocity $\vv_0 \in V$.  \medskip

Throughout the rest of the paper, the $L^2$-norm of a given $u$ is simply denoted by
$\|u\|$, $\|\cdot\|_p $ and $\| \cdot \|_{s, p}$ denote the standard $L^p$
  and $W^{s,p}$ norms, respectively.
%
\subsection{Strong solutions} 
This aim of this subsection is to prove that given a finite time $T$, any strong
(classical) solution $\vv$ of~\eqref{eq:Gen-VoigtNSE} has natural bounds in $L^\infty(0,T;
V)\, \cap \, W^{1, 2} (0,T; V_{1/2})$ derived from energy balances, showing that the term
$-\alpha \div (\ell(\x) \, D \vv_t)$ --despite the degeneracy at the boundary-- brings a
strong regularizing effect on the system. In particular, the generalized Voigt term
provides stronger a priori estimate when compared to the usual (non regularized)
Navier-Stokes equations, since it allows to show bounds in critical scaling-invariant
spaces \textit{\`a la} Kato-Fujita. These estimates are essential for proving the
existence result of the next subsection.  \medskip

Following~\cite{Ler1934, RL19}, when considering $\vv_0 \in V \cap C (\overline \Om)^3$,
we say that $(\vv, p)$ is a strong solution to~\eqref{eq:Gen-VoigtNSE} over $Q_T= [0, T]
\times \Om$, if
\begin{itemize}
\item $\forall \, \tau <T$, $\vv \in C^2(Q_\tau)^3$, $p \in C^1(Q_\tau)$, and they satisfy
  the relations ((\ref{eq:Gen-VoigtNSE}), i), ii)) in the classical sense in $Q_\tau = [0,
  \tau] \times \Om$,
\item $\vv (t, \cdot) \in C(\overline \Om)^3$ for all $t <T$, and $\vv (t, \cdot) =0$ on
  $\Ga$,  
\item $\vv(t, \cdot)$ uniformly converges  to $\vv_0$ as $t \to 0^{+}$. 
\end{itemize} 
\begin{Remark} 
  We frequently talk about the velocity $\vv$ as a strong solution, without mentioning the
  pressure $p$. This means that we have implicitly projected the system over
  divergence-free vector fields by the Leray projector, which eliminates the pressure. The
  pressure can be  recovered via the De Rham procedure (see e.g. Temam~\cite{RT01}).
\end{Remark} 

\begin{Remark} We say that a strong solution $\vv$ of~\eqref{eq:Gen-VoigtNSE} has a
  singularity at a given time $0<T^\star<\infty$ if $\| \vv(t) \|_{\infty} \to \infty$ as $t \to
  T^\star$, $t<T^\star$. At this stage, we are not able to show that any strong solution
  has no singularity. We do not even know if there exist strong solutions, which is an
  open problem.
\end{Remark} 

The estimates we get are based on the following non standard version of Gronwall's Lemma,
the proof of which is carried out, \textit{e.g.}, in  Emmrich~\cite{EE99}.

\begin{lemma} \label{lem:Gronwall} Let $\lambda \in L^1([0,T])$, with $\lambda(t)\geq0$
  for almost all $t\in[0,T]$, let $g\in C([0,T])$ be a non-decreasing function, and let $f
  \in L^\infty ([0,T])$, such that $\forall \, t \in [0,T]$, it holds
\begin{equation*} 
f(t) \le g(t) + \int_0^t \lambda (s) f(s) \,ds. 
\end{equation*}
Then, we have
\begin{equation*} 
  f(t) \le g(t) \exp \left ( \int_0^t \lambda (s) \,ds \right ). 
\end{equation*}
\end{lemma}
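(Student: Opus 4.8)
The plan is to reduce the statement to the classical integrating-factor argument for a first-order differential inequality, and to invoke the monotonicity of $g$ only at the very last step. First I would set $F(t) := \int_0^t \lambda(s) f(s)\,ds$. Since $\lambda \in L^1([0,T])$ and $f \in L^\infty([0,T])$, the product $\lambda f$ lies in $L^1([0,T])$, so $F$ is absolutely continuous with $F'(t) = \lambda(t) f(t)$ for a.e.\ $t$ and $F(0)=0$. The hypothesis $f(t) \le g(t) + F(t)$ together with $\lambda \ge 0$ then yields the a.e.\ differential inequality $F'(t) - \lambda(t) F(t) \le \lambda(t) g(t)$.

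Next I would introduce the integrating factor. Writing $\Lambda(t) := \int_0^t \lambda(s)\,ds$, which is absolutely continuous with $\Lambda' = \lambda$ a.e., the map $t \mapsto e^{-\Lambda(t)} F(t)$ is a product of absolutely continuous functions, hence absolutely continuous, and the product rule gives for a.e.\ $t$
\[
\frac{d}{dt}\big(e^{-\Lambda(t)} F(t)\big) = e^{-\Lambda(t)}\big(F'(t) - \lambda(t) F(t)\big) \le e^{-\Lambda(t)} \lambda(t) g(t).
\]
Integrating over $[0,t]$ and using $F(0)=0$ then produces $e^{-\Lambda(t)} F(t) \le \int_0^t \lambda(s) g(s) e^{-\Lambda(s)}\,ds$.

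The monotonicity of $g$ enters precisely here: since $g$ is non-decreasing, $g(s) \le g(t)$ for $s \le t$, so the right-hand side is bounded by $g(t) \int_0^t \lambda(s) e^{-\Lambda(s)}\,ds = g(t)\big(1 - e^{-\Lambda(t)}\big)$, the last integral being computed exactly from $\frac{d}{ds} e^{-\Lambda(s)} = -\lambda(s) e^{-\Lambda(s)}$. Multiplying by $e^{\Lambda(t)}$ gives $F(t) \le g(t)\big(e^{\Lambda(t)} - 1\big)$, and substituting back into $f(t) \le g(t) + F(t)$ yields $f(t) \le g(t) e^{\Lambda(t)}$, which is the asserted bound for every $t \in [0,T]$.

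I do not anticipate a genuine difficulty: the only points demanding care are the measure-theoretic justifications --- that $F$ and $e^{-\Lambda}F$ are absolutely continuous, so that the fundamental theorem of calculus and the a.e.\ product rule apply --- and the fact that the exponential must carry $g(t)$ rather than $g(s)$, which is exactly what the non-decreasing assumption on $g$ secures (the textbook Gronwall lemma assumes $g$ constant). An equivalent and even shorter route would be to fix $t$, observe that $g(s)\le g(t)$ converts the hypothesis into $f(s) \le g(t) + \int_0^s \lambda f$ on $[0,t]$ with the \emph{constant} majorant $g(t)$, and then quote the classical Gronwall inequality on $[0,t]$; I would nevertheless prefer the self-contained integrating-factor version above.
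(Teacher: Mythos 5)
Your proof is correct. Note, however, that the paper does not actually prove this lemma: it simply cites Emmrich's technical report \cite{EE99} for the argument, so there is no in-paper proof to compare against. What you supply is the standard self-contained derivation: the auxiliary function $F(t)=\int_0^t \lambda f$ is absolutely continuous because $\lambda f\in L^1$, the hypothesis converts (after multiplying by $\lambda\ge 0$) into the a.e.\ differential inequality $F'-\lambda F\le \lambda g$, the integrating factor $e^{-\Lambda}$ with $\Lambda'=\lambda$ a.e.\ is legitimate since a product of absolutely continuous functions on $[0,T]$ is absolutely continuous and obeys the product rule a.e., and the monotonicity of $g$ is used exactly where it must be, to replace $g(s)$ by $g(t)$ and evaluate $\int_0^t\lambda(s)e^{-\Lambda(s)}\,ds=1-e^{-\Lambda(t)}$ in closed form. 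Substituting $F(t)\le g(t)\bigl(e^{\Lambda(t)}-1\bigr)$ back into the hypothesis gives the claimed bound pointwise. Your closing remark --- that one could instead freeze $t$, majorize $g(s)$ by the constant $g(t)$ on $[0,t]$, and quote the classical constant-majorant Gronwall inequality --- is the quickest reduction and is essentially how this variant is usually derived in the literature; either route is complete, and the measure-theoretic care you flag (absolute continuity of $F$ and of $e^{-\Lambda}F$) is precisely what a careful write-up needs.
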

In this subsection we assume that $\fv (t) = \fv \in C(\overline \Om)$ does not depend on
$t$, and we denote by $F$ either $\| {\bf f} \|_{-1, 2}^2$ or $\| {\bf f} \|^2$ so far non
risk of confusion occurs, and $C$ denotes any constant (normally $C_p \| \fv\| \le \|
\fv\|_{-1,2}$, $C_p$ being the Poincar\'e's constant). Among many choices for the
functional spaces of the source term (see also the discussion in the next subsection,
where different choices are considered), this one has the advantage that it yields a clear
and neat bound of the growth of the r.h.s in the estimates for statistical equilibrium.

The main result of this subsection is the following.
\begin{lemma} 
  \label{lem:estimates} 
  Let $\vv$ be a strong solution of~\eqref{eq:Gen-VoigtNSE} in $Q_T= [0, T] \times \Om$,
  for a given time $T$. Then, the following estimates hold true for all $s \in [0, T[$:
\begin{eqnarray} 
  \label{eq:estimate1} && 
  \| \vv(s) \|_{1/2, 2}^2 + \nu \int_0^s \| \g \vv(t) \|^2 \,dt \le
  C\left ({Fs \over \nu}  +  E(0)(\alpha, \ell) \right),  
  \\
  \label{eq:estimate11} 
  &&\nu \| \g \vv (s)  \| ^2 \le \left( \nu \| \g \vv_0 \| ^2  + Fs
  \right) \exp\left\{ \dfrac{C}{ \alpha \nu^2} \left({Fs \over \nu}  +  E(0)(\alpha, \ell)
    \right) \right\},  
\end{eqnarray} 
and
\begin{multline} 
  \label{eq:estimate111}
  C \alpha \int_0^s  \| \vv_t (t) \|_{1/2, 2}^2 \,dt + \alpha \int_0^s
  \| \sqrt \ell D \vv_t (t) \|^2 \,dt \le Fs + \nu \|\g \vv_0\|^2
  \\
  + \dfrac{C}{\alpha \nu^2} \left(\nu \|\g \vv_0\|^2 + FT\right) \left(\dfrac{Fs}{\nu} +
    E(0)(\alpha, \ell) \right) \exp\left\{ \dfrac{C}{ \alpha \nu^2} \left({FT \over \nu} +
      E(0)(\alpha, \ell) \right) \right\},
\end{multline}
where $2E(0)(\alpha, \ell) = \| \vv_0 \|^2 + \alpha \| \sqrt{\ell} D\vv_0 \|^2$. In
particular, $\vv$ has natural bounds in $L^\infty(0,T; V)$ $\cap$ $W^{1, 2} (0,T; V_{1/2})$
and $\sqrt \ell D \vv_t \in L^2(0,T; L^2(\Om)^9)$.
\end{lemma}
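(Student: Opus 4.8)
The plan is to derive all three estimates by the energy method, pairing the momentum equation in \eqref{eq:Gen-VoigtNSE} first with $\vv$ and then with $\vv_t$, and to convert the resulting weighted bounds on $\sqrt\ell\,D\vv$ and $\sqrt\ell\,D\vv_t$ into genuine $H^{1/2}$ control by means of Theorem~\ref{thm:main_estimate}. Throughout I would use the three-dimensional Sobolev embeddings $H^1(\Om)\hookrightarrow L^6(\Om)$ and $H^{1/2}(\Om)\hookrightarrow L^3(\Om)$, the Korn inequality \eqref{eq:ine_norms}, and the non-standard Gronwall Lemma~\ref{lem:Gronwall}. Since $\vv$ is a strong solution, the required integrations by parts and time differentiations are all justified on $Q_\tau$ for $\tau<T$.

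\textbf{Step 1 (basic energy balance).} First I would test with $\vv$. The convective and pressure terms vanish because $\div\vv=0$ and $\vv|_\Ga=0$; the Voigt term becomes $\tfrac{\alpha}{2}\tfrac{d}{dt}\|\sqrt\ell\,D\vv\|^2$ after one integration by parts (the boundary contribution vanishes since $\vv=0$ on $\Ga$, and $D\vv_t:\g\vv=D\vv_t:D\vv$ by symmetry); the viscous term gives $\nu\|\g\vv\|^2$. Bounding $\langle\fv,\vv\rangle$ by Young's inequality and absorbing half of $\nu\|\g\vv\|^2$ yields $\tfrac{d}{dt}(\|\vv\|^2+\alpha\|\sqrt\ell\,D\vv\|^2)+\nu\|\g\vv\|^2\le F/\nu$. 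Integrating in time and invoking \eqref{eq:norm_V1/2} to replace $\|\sqrt\ell\,D\vv(s)\|$ by $\|\vv(s)\|_{1/2,2}$ gives \eqref{eq:estimate1}.

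\textbf{Step 2 (the $\vv_t$ test and the nonlinear term).} The remaining two estimates come from pairing the equation with $\vv_t$, which produces the identity $\|\vv_t\|^2+\alpha\|\sqrt\ell\,D\vv_t\|^2+\tfrac{\nu}{2}\tfrac{d}{dt}\|\g\vv\|^2=\langle\fv,\vv_t\rangle-\langle(\vv\cdot\g)\vv,\vv_t\rangle$, the pressure again dropping out since $\div\vv_t=0$. The forcing is harmless: $\langle\fv,\vv_t\rangle\le\tfrac12\|\vv_t\|^2+\tfrac{F}{2}$, the first piece absorbed on the left. The convective term is the crux, and this is where I expect the main obstacle and where the Voigt regularization is decisive: $\vv_t$ is controlled only in $H^{1/2}$, not in $H^1$, so the estimate must close using $\|\sqrt\ell\,D\vv_t\|$ alone. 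Writing $|\langle(\vv\cdot\g)\vv,\vv_t\rangle|\le\|(\vv\cdot\g)\vv\|_{3/2}\,\|\vv_t\|_3\le\|\vv\|_6\,\|\g\vv\|\,\|\vv_t\|_3$ by Hölder, then using $H^1\hookrightarrow L^6$, $H^{1/2}\hookrightarrow L^3$ and Theorem~\ref{thm:main_estimate} (for $\vv_t\in V$ the quotient norm controls the full $H^{1/2}$ norm, since $K\cap V=\{0\}$), I obtain the bound $C\|\g\vv\|^2\|\sqrt\ell\,D\vv_t\|$. Young's inequality then lets me absorb $\tfrac{\alpha}{2}\|\sqrt\ell\,D\vv_t\|^2$ on the left, leaving only $\tfrac{C}{\alpha}\|\g\vv\|^4$ on the right.

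\textbf{Step 3 (Gronwall and time integration).} Setting $f(t)=\nu\|\g\vv(t)\|^2$, Step~2 gives $f'\le F+\lambda(t)\,f$ with $\lambda(t)=\tfrac{C}{\alpha\nu}\|\g\vv(t)\|^2$, since $\tfrac{C}{\alpha}\|\g\vv\|^4=\lambda(t)f(t)$. Applying Lemma~\ref{lem:Gronwall} with $g(s)=\nu\|\g\vv_0\|^2+Fs$ yields $f(s)\le(\nu\|\g\vv_0\|^2+Fs)\exp(\int_0^s\lambda)$, and bounding $\int_0^s\lambda=\tfrac{C}{\alpha\nu}\int_0^s\|\g\vv\|^2$ by \eqref{eq:estimate1} reproduces exactly the exponent $\tfrac{C}{\alpha\nu^2}(Fs/\nu+E(0)(\alpha,\ell))$ of \eqref{eq:estimate11}. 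Finally, integrating the identity of Step~2 in time, dropping $\tfrac{\nu}{2}\|\g\vv(s)\|^2\ge0$, and estimating $\int_0^s\|\g\vv\|^4\le(\sup_{t\le s}\|\g\vv\|^2)\int_0^s\|\g\vv\|^2$ via \eqref{eq:estimate11} and \eqref{eq:estimate1} gives \eqref{eq:estimate111}; the term $C\alpha\int_0^s\|\vv_t\|_{1/2,2}^2$ on the left is subsumed by $\alpha\int_0^s\|\sqrt\ell\,D\vv_t\|^2$ through Theorem~\ref{thm:main_estimate}. The membership of $\vv$ in $L^\infty(0,T;V)\cap W^{1,2}(0,T;V_{1/2})$ and of $\sqrt\ell\,D\vv_t$ in $L^2(0,T;L^2(\Om)^9)$ then follows at once from \eqref{eq:estimate11}, \eqref{eq:estimate111} and \eqref{eq:estimate1}.
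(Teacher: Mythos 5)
Your proposal is correct and follows essentially the same route as the paper: testing with $\vv$ and then with $\vv_t$, estimating the convective term via H\"older with $H^1\hookrightarrow L^6$ and the critical embedding $H^{1/2}\hookrightarrow L^3$ together with Theorem~\ref{thm:main_estimate}, absorbing by Young's inequality, and closing with the non-standard Gronwall Lemma~\ref{lem:Gronwall} and the first estimate. The only cosmetic difference is that you absorb the nonlinear term directly into $\tfrac{\alpha}{2}\|\sqrt\ell D\vv_t\|^2$ rather than first converting half of that term into $C\alpha\|\vv_t\|_{1/2,2}^2$ as the paper does, which is an equivalent bookkeeping choice.
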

\begin{proof} 
  We take the dot product of (\eqref{eq:Gen-VoigtNSE}, i)) by $\vv$. We integrate by parts
  and we use the identity $\langle (\vv \cdot \nabla)\, \vv, \vv \rangle = 0$. These
  calculations are justified because $\vv$ is a strong solution, and this gives for any
  $s\in [0,T]$,
\begin{equation*} 
  \dfrac{1}{2} {d \over dt} ( \| \vv(t) \|^2 + \alpha \| \sqrt \ell D \vv (t) \|^2) + \nu
  \| \g \vv (t) \|^2 = \langle \fv (t) , \vv(t) \rangle \le {F \over 2 \nu } + {\nu \over
    2}\| \g \vv (t) \|^2 ,  
\end{equation*}
hence~\eqref{eq:estimate1} follows after having integrated in time over $[0, s]$, by
using~\eqref{eq:norm_V1/2}, the fact that the norm of $V_{1/2}$ is that inherited from
$H^{1/2}(\Om)^3$, and $H^{1/2}(\Om) \hookrightarrow L^2 (\Om)$ with continuous dense
injection.

We next take the dot product of (\eqref{eq:Gen-VoigtNSE}, i)) by $\vv_t$. In this case the
non-linear term brings a contribution in this new energy budget, given by
\begin{equation*} 
  \| \vv_t(t) \|^2 + \alpha \| \sqrt \ell D \vv_t (t) \|^2 + {\nu \over 2} {d \over dt} \|
  \g \vv (t) \|^2 = \langle \fv (t) , \vv_t(t) \rangle - \langle (\vv  \cdot \nabla)\, \vv,
  \vv_t   \rangle (t), 
\end{equation*}
As we can estimate 
$$| \langle \fv (t) , \vv_t(t) \rangle| \le \dfrac{F}{2} + \dfrac{1}{2} \|
\vv_t(t)\|^2,$$ we obtain by using~\eqref{eq:norm_V1/2}, keeping half of the contribution
of the term $\alpha \| \sqrt \ell D \vv_t (t) \|^2$,
\begin{equation} 
  \label{eq:estimate44} 
  {1 \over 2} \| \vv_t(t) \|^2 + C \alpha  \| \vv_t(t) \|_{1/2, 2}^2  + {\alpha \over 2}
  \| \sqrt \ell D \vv_t (t) \|^2 + 
  {\nu \over 2} {d \over dt} \| \g \vv(t) \|^2 \le {F \over 2} + | \langle (\vv \cdot
  \nabla)\, \vv, \vv_t \rangle (t)|. 
\end{equation} 
To deal with the nonlinear term, we use standard interpolation inequalities. The key of
the process is the continuous embedding $H^{1/2}(\Om)\hookrightarrow L^3 (\Om)^3$, which
is the limit case. Therefore, we have
\begin{equation*} 
  \begin{array}{ll} 
    |  \langle (\vv  \cdot \nabla)\, \vv, \vv_t   \rangle(t) | & \le \| \vv(t)
    \|_{6} \| \g \vv(t) \| \, \| \vv_t(t) \|_{3} 
    \\
    & \le C \| \g \vv(t)\|^2   \,  \| \vv_t(t) \|_{1/2, 2}  \phantom{\int_0^T}
    \\
    & \le \zoom {1 \over 2 C \alpha} \| \g \vv(t) \|^4 + {C \alpha \over 2}  { \| \vv_t(t)
      \|_{1/2, 2}^2 },   
  \end{array} 
\end{equation*}
so that~\eqref{eq:estimate44}  becomes 
\begin{equation}
  \label{eq:estimate35}  
  \| \vv_t(t) \|^2 + C \alpha  \| \vv_t(t) \|_{1/2, 2}^2  + \alpha \| \sqrt
  \ell D \vv_t (t) \|^2  + \nu {d \over dt} \| \g \vv (t) \|^2  \le F +  {1 \over  C
    \alpha} \| \g \vv (t) \|^4 . 
\end{equation}
In particular it follows from the above estimate that
\begin{equation*}
  \nu  {d \over dt} \| \g \vv (t) \|^2  \le F   +  {1 \over C \alpha} \| \g \vv (t) \|^4,
\end{equation*}
that we integrate over $[0, s]$, $s \in [0, T]$, so that 
\begin{equation*}
  \nu \| \g \vv (s) \|^2 \le \nu \| \g \vv_0 \|^2 + Fs + {1 \over C \alpha}\int_0^s \| \g
  \vv (t) \|^4 \,dt.
\end{equation*} 
From there, Lemma~\ref{lem:Gronwall} is applied on every time interval $[0, \tau]$ for
$\tau < T$, with
\begin{equation*}
  f(t) = \nu \| \g \vv (t) \|^2 \quad\text{and}\quad \lambda(t) = \dfrac{1}{C \alpha \nu}
  \| \g \vv (t) \|^2, 
  \end{equation*}
  both are in $L^1(0, \tau) \cap L^\infty(0, \tau)$ and $g(t) = \nu \| \g \vv_0 \|^2 + Ft
  $ which is a non decreasing function, which leads to
\begin{equation*} 
  \nu \| \g \vv (s) \|^2  \le (\nu \| \g \vv_0 \|^2 + Fs) \exp\left\{\dfrac{1}{C \alpha \nu}
    \int_0^s \| \g \vv (t) \|^2 \,dt \right\},  
\end{equation*}
and yields~\eqref{eq:estimate11} by using~\eqref{eq:estimate1}. Therefore, the
inequality~\eqref{eq:estimate111} is deduced from~\eqref{eq:estimate35} combined
with~\eqref{eq:estimate1}-\eqref{eq:estimate11}.
\end{proof}
%
\subsection{Existence and uniqueness of regular-weak solutions}
We start by giving the definition of a "regular-weak solution" to the generalized
Navier-Stokes-Voigt system~\eqref{eq:Gen-VoigtNSE}. This definition is based on
Lemma~\ref{lem:estimates}. We say "weak solution" since it is given by a weak formulation,
"regular" since, because of Lemma~\ref{lem:estimates}, we will search for a solution in
$L^\infty(0,T; V) \cap W^{1,2}(0,T; V_{1/2})$. This space is considerably smaller than
that involved in ``standard'' Leray-Hopf weak solutions to the Navier-Stokes equations
(NSE) that are just in $L^\infty(0,T; H) \cap L^2(0,T; V)$. As we shall see it, regular
weak solutions are unique and satisfy the energy equality, a fact which is still not known
about weak solutions to the NSE.
\begin{definition}
  \label{def:reg_w_sol}
  We say that a function $\vv \in L^\infty(0,T; V)
  \cap W^{1,2}(0,T; V_{1/2})$ is a regular-weak solution of the initial boundary value
  problem~\eqref{eq:Gen-VoigtNSE} if it holds true that
\begin{equation*}
    \frac{d}{dt}\Big[(\vv, \bphi) + \alpha (\ell D\vv, D\bphi)\Big] + \nu(\nabla\vv,
    \nabla\bphi) + ((\vv \cdot \nabla)\,\,\vv, \bphi)   
    = \langle\fv, \bphi \rangle \quad \forall \bphi \in V,
\end{equation*}
in the sense of $\mathcal{D}'(0,T)$ and the initial datum is attained at least in the
sense of $V_{1/2}$, that is
\begin{equation*}
 \lim_{t \to 0^+} \| \vv(t) - \vv_0 \|_{V_{1/2}} = 0. 
\end{equation*}
\end{definition}
The main theorem we prove is the following one, showing the well-posedness of the system,
globally in time. To fix the ideas and for the simplicity, we stay in a usual weak
solutions framework by taking the source term $\fv = \fv(t)$ in the space
$L^2(0,T;H^{-1/2}(\Omega)^3)$\footnote{ Recall that
  $H^{-1/2}(\Omega)=[H^{1/2}_0(\Omega)]'$ and be aware that $H^{-1/2}(\Omega) \subsetneq [
  H^{1/2}_{00}(\Om)]'$ with strict inclusion, see Lions-Magenes~\cite{MR0350177}.}.
However, many variants can be considered, starting with $\fv \in L^2(0,T;V'_{1/2})$, or
$\fv(t) = \fv \in L^2(\Om)^3$ following the previous subsection, which does not change too
much. An interesting case would be $\fv \in L^2_{uloc} (\R^{+};V'_{1/2})$, for which
additional work remains to be done in the context of the long-time behavior
(see~\cite{LBRL19}).
\begin{theorem}
  \label{thm:existence} 
  Let be given $\vv_0 \in V$ and $\fv \in L^2(0,T;H^{-1/2}(\Omega)^3)$. Then, there exists
  a unique regular-weak solution of the initial boundary value
  problem~\eqref{eq:Gen-VoigtNSE} in $[0,T]$, which satisfies the energy (of the model)
  equality for all $t \ge 0$,
\begin{equation} 
  \label{eq:energy_balance} 
  E(t)(\alpha, \ell) + \nu \int_0^t \| \nabla \vv(s) \|^2 \,ds = E(0)(\alpha, \ell) +
  \int_0^t\langle{\bf  f}(s),\vv(s)\rangle\, ds.   
\end{equation}
where $E(t)(\alpha, \ell) := {1 \over 2}  \left(\| \vv(t) \|^2  +  \alpha \| \sqrt{\ell}
  D \vv(t) \|^2\right)$. 
\end{theorem}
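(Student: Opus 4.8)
The plan is to construct a solution by a Galerkin approximation, derive uniform bounds from the a priori estimates of Lemma~\ref{lem:estimates} (suitably adapted to the weaker source term $\fv \in L^2(0,T;H^{-1/2})$), pass to the limit, and then prove uniqueness and the energy equality separately. First I would fix a Galerkin basis of $V$ — say eigenfunctions of the Stokes operator, which lie in $V$ and are smooth enough — and seek finite-dimensional approximations $\vv^m(t) = \sum_{j=1}^m c_j^m(t) \mathbf{w}_j$. The crucial point is the structure of the time-derivative term: the bilinear form $a(\vv,\bphi) = (\vv,\bphi) + \alpha(\ell D\vv, D\bphi)$ is symmetric, continuous, and \emph{coercive} on $V_{1/2}$ by Theorem~\ref{thm:main_estimate}, since $a(\vv,\vv) = \|\vv\|^2 + \alpha\|\sqrt\ell D\vv\|^2 \geq c\|\vv\|_{1/2,2}^2$ (modulo the kernel $K$, which is trivial here since we work in $V \subset H_0^1$). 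Consequently the Galerkin system is a well-posed ODE for $\mathbf{c}^m(t)$, solvable on $[0,T]$ once the uniform bounds preclude blow-up.

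The key estimates come in two layers, exactly mirroring Lemma~\ref{lem:estimates}. Testing the Galerkin equation with $\vv^m$ gives the energy balance~\eqref{eq:energy_balance} at the discrete level, yielding a uniform bound in $L^\infty(0,T;V) \cap$ (via coercivity of $a$) control of $\|\vv^m\|_{1/2,2}$. Testing with $\vv^m_t$ and using the embedding $H^{1/2}(\Om) \hookrightarrow L^3(\Om)^3$ to absorb the nonlinear term $\langle(\vv^m\cdot\nabla)\vv^m, \vv^m_t\rangle$ — precisely the interpolation argument in the proof of Lemma~\ref{lem:estimates} — together with the non-standard Gronwall Lemma~\ref{lem:Gronwall} gives a uniform bound in $W^{1,2}(0,T;V_{1/2})$ and controls $\sqrt\ell D\vv^m_t$ in $L^2(0,T;L^2)$. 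These bounds are uniform in $m$, so up to a subsequence $\vv^m \rightharpoonup \vv$ weakly-$\ast$ in $L^\infty(0,T;V)$ and $\vv^m_t \rightharpoonup \vv_t$ weakly in $L^2(0,T;V_{1/2})$. The Aubin--Lions lemma then furnishes strong convergence in $L^2(0,T;L^q)$ for $q<6$, which suffices to pass to the limit in the quadratic nonlinearity; the linear terms pass by weak convergence. The attainment of the initial datum in $V_{1/2}$ follows from $\vv \in W^{1,2}(0,T;V_{1/2}) \hookrightarrow C([0,T];V_{1/2})$.

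For uniqueness, suppose $\vv_1, \vv_2$ are two regular-weak solutions with the same data and set $\mathbf{w} = \vv_1 - \vv_2$. Subtracting the equations and testing with $\mathbf{w}$ (legitimate since $\mathbf{w} \in W^{1,2}(0,T;V_{1/2})$), I obtain
\begin{equation*}
\frac{1}{2}\frac{d}{dt}\Big(\|\mathbf{w}\|^2 + \alpha\|\sqrt\ell D\mathbf{w}\|^2\Big) + \nu\|\nabla\mathbf{w}\|^2 = -\big((\mathbf{w}\cdot\nabla)\vv_1, \mathbf{w}\big) - \big((\vv_2\cdot\nabla)\mathbf{w}, \mathbf{w}\big),
\end{equation*}
where the last term vanishes by the divergence-free condition. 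The remaining trilinear term is estimated by $\|\mathbf{w}\|_6 \|\nabla\vv_1\| \|\mathbf{w}\|_3 \leq C\|\nabla\vv_1\|\,\|\mathbf{w}\|_{1/2,2}^2 \leq C\|\nabla\vv_1\|\big(\|\mathbf{w}\|^2 + \alpha\|\sqrt\ell D\mathbf{w}\|^2\big)$, using $H^{1/2}\hookrightarrow L^3$ and the coercivity of $a$; since $\|\nabla\vv_1\| \in L^\infty(0,T)$, the standard Gronwall inequality with $\mathbf{w}(0)=0$ forces $\mathbf{w}\equiv 0$. Finally, the energy equality~\eqref{eq:energy_balance} holds because the regularity $\vv \in L^\infty(0,T;V) \cap W^{1,2}(0,T;V_{1/2})$ makes $\vv$ an admissible test function in its own equation, so no energy is lost in the inequality — unlike for Leray--Hopf solutions. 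The main obstacle I anticipate is verifying that the coercivity of $a$ on $V_{1/2}$ interacts cleanly with the $L^3$-control of $\vv_t$ throughout both the limit passage and the uniqueness argument; everything hinges on the sharp embedding $H^{1/2}\hookrightarrow L^3$ being the exact borderline case, leaving no room to spare in the nonlinear estimates.
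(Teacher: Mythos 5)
Your architecture coincides with the paper's proof: Galerkin approximation (the paper proves invertibility of the mass matrix $(\delta_{jm}+\alpha_{jm})$ by hand, which is exactly your finite-dimensional coercivity observation), the same two-tier estimates (test with $\vv^m$, then with $\vv^m_t$, absorbing the nonlinearity via the borderline embedding $H^{1/2}(\Om)\hookrightarrow L^3(\Om)$ and the nonstandard Gronwall Lemma~\ref{lem:Gronwall}), Aubin--Lions compactness (the paper uses the triple $V \hookrightarrow V_{3/4} \hookrightarrow V_{1/2}$, equivalent to your $L^2(0,T;L^q)$, $q<6$, statement), and the energy equality via admissibility of $\vv$ as test function. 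One caveat on the latter: the paper isolates the identity $(\ell D\vv_t, D\vv) = \frac{1}{2}\frac{d}{dt}\|\sqrt{\ell} D\vv\|^2$, which is justified precisely because $\sqrt{\ell}\,D\vv_t \in L^2(0,T;L^2(\Om)^9)$; your parenthetical claim that testing the difference equation with $\mathbf{w}$ is ``legitimate since $\mathbf{w}\in W^{1,2}(0,T;V_{1/2})$'' glosses this, because $\vv_t \in V_{1/2}$ does \emph{not} control $\sqrt{\ell}\,D\vv_t$ (Theorem~\ref{thm:main_estimate} only goes in the opposite direction). You must invoke the separately derived bound on $\sqrt{\ell}\,D\vv^m_t$ in $L^2(Q_T)$, which your second-tier estimate does provide.

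There is, however, a genuine error in your uniqueness estimate: the inequality $\|\mathbf{w}\|_6\,\|\nabla\vv_1\|\,\|\mathbf{w}\|_3 \le C\|\nabla\vv_1\|\,\|\mathbf{w}\|_{1/2,2}^2$ is false, since it implicitly uses $H^{1/2}(\Om)\hookrightarrow L^6(\Om)$, which fails in three dimensions ($H^{1/2}$ embeds only into $L^3$, the borderline you correctly exploited elsewhere). Only the $L^3$ factor is controlled by $\|\mathbf{w}\|_{1/2,2}$; the $L^6$ factor costs a full gradient, $\|\mathbf{w}\|_6 \le C\|\nabla\mathbf{w}\|$. As written, your Gronwall argument therefore does not close. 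The repair is to spend the viscous term you discarded:
\begin{equation*}
  \left| \left( (\mathbf{w}\cdot\nabla)\,\vv_1, \mathbf{w} \right) \right|
  \le C \|\nabla\mathbf{w}\|\,\|\nabla\vv_1\|\,\|\mathbf{w}\|_{1/2,2}
  \le \frac{\nu}{2}\|\nabla\mathbf{w}\|^2 + \frac{C}{\nu}\|\nabla\vv_1\|^2\,\|\mathbf{w}\|_{1/2,2}^2,
\end{equation*}
and then use the embedding~\eqref{emb} to bound $\|\mathbf{w}\|_{1/2,2}^2 \le C\left(\|\mathbf{w}\|^2 + \alpha\|\sqrt{\ell}\,D\mathbf{w}\|^2\right)$, so that Gronwall applies with weight $\lambda(t) = C\nu^{-1}\|\nabla\vv_1(t)\|^2 \in L^1(0,T)$, available since $\vv_1 \in L^\infty(0,T;V)$. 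The paper's own route is a variant: it interpolates $\|\mathbf{w}\|_3 \le \|\mathbf{w}\|^{1/2}\|\mathbf{w}\|_6^{1/2}$, absorbs $\frac{\nu}{2}\|\nabla\mathbf{w}\|^2$, and runs Gronwall on $\|\mathbf{w}\|^2$ alone with weight $C\nu^{-1}\|\nabla\vv_2\|^4 \in L^1(0,T)$. Either fix restores your argument; the rest of the proposal is sound.
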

\begin{proof}
  The proof follows by a standard Faedo-Galerkin approximation with suitable a-priori
  estimates, compactness argument, and interpolation results. It is divided into the
  following four steps:
\begin{enumerate}[1)]
\item Construction of approximate solutions, locally in time;
\item Estimates;
\item Passing to the limit in the equations;
\item  Energy balance and uniqueness. 
\end{enumerate}
  
{\sl Step 1. Construction of approximate solutions, locally in time}. Let
$\{\bpsi_n\}_n\subset {\mathcal{V}}$ be a Hilbert basis of $V$ which we can suppose,
without lack of generality, to be orthonormal in $H$ as well as orthogonal in $V$. We look for
approximate Galerkin functions
\begin{equation*}
    \vv^n(t, x) = \sum_{j=1}^n c_{n j}(t) \,\bpsi_j(x) \qquad \text{for } n \in \N,
\end{equation*}
which has to solve the generalized Navier-Stokes-Voigt equations projected over
$\mathbf{W}_n = \text{Span}(\bpsi_1,\dots,\bpsi_n)$, that is
\begin{equation*}
    \begin{aligned}
        \frac{d}{dt}\big[(\vv^n,\bpsi_m) + \alpha(\ell
        D\vv^n,D\bpsi_m)\big] + \nu(\nabla\vv^n, \nabla\bpsi_m) + ((\vv^n\cdot\nabla)\,\vv^n,\bpsi_m)
        &=\langle\fv,\bpsi_m\rangle,
        \\
        (\vv^n(0),\bpsi_m)&=(\vv_0,\bpsi_m),
    \end{aligned}
\end{equation*}
for $m = 1, \dots, n$. The above problem is a Cauchy problem for a system of $n$-ordinary
differential equations in the coefficients $c_{n m}(t)$. We define the following
quantities for $j, l, m = 1, \dots , n$:
\begin{equation*}
    \begin{aligned}
    & \alpha_{j m} := \alpha(\ell D\bpsi_j, D\bpsi_m), \qquad
    \beta_{j m} := \nu(\nabla\bpsi_j, \nabla\bpsi_m) ,
    \\
    & \gamma_{jlm} := ((\bpsi_j\cdot\nabla) \,\bpsi_l, \bpsi_m), \qquad
    f_m(t) := \langle\fv(t), \bpsi_m\rangle,
    \end{aligned}
\end{equation*}
and we have a non-homogeneous system of ordinary differential equations with constant
coefficients (which we write with the convention of summation over repeated indices)
\begin{equation*}
    c'_{n j}(t)(\delta_{j m} + \alpha_{j m}) + c_{n j}(t) \beta_{j m} + c_{n j}(t) c_{n
      l}(t) \gamma_{jlm} = f_m(t),\qquad m=1,\dots,n,
\end{equation*}
where $\delta_{i j}$ denotes the standard Kronecker delta notation. The above system is not
in normal form. In order to obtain a system for which we can apply the Cauchy-Lipschitz
Theorem, we have to show that the matrix $(\delta_{j m}+\alpha_{j m})$ can be
inverted. Hence, since we work in a finite dimensional spaces it is enough to show that
its kernel contains only the zero vector. So let $\bxi = (\xi_{1}, \dots, \xi_{n})\in
\R^n$ be such that
\begin{equation*}
    (\delta_{j m} + \alpha_{j m}) \,\xi_j = 0.
\end{equation*}
Multiplying the above equation by $\xi_m$ and summing also over $m = 1, \dots, n$ leads to 
\begin{equation*} 
    0 =  \|\bxi\|^2 + \alpha(\ell \phi,\phi) = \|\bxi\|^2 + \alpha(\sqrt{\ell}
    \phi,\sqrt{\ell}\phi) \geq \|\bxi\|^2 \quad\text{ with } \quad  \phi := \sum_{j=1}^n
    \xi_j D\bpsi_j,  
\end{equation*}
due to the facts that $\alpha>0$ and $\ell(\x)\geq0$. Hence, this implies that
$\bxi\equiv\mathbf{0}$, hence that the matrix $(\delta_{j m} + \alpha_{j m})$ can be
inverted. This allows to rewrite the system of ODEs for the coefficients $c_{n j}$ as
follows
\begin{equation*}
  c'_{n j}(t) + c_{n j}(t) (\delta_{j m} + \alpha_{j m})^{-1} \beta_{j m} + 
  c_{n j}(t) c_{n l}(t) (\delta_{j m} + \alpha_{j m})^{-1} \gamma_{jlm} = (\delta_{j m} +
  \alpha_{j m})^{-1} f_m(t), 
\end{equation*}
and to apply the basic theory of ordinary differential equations. Note that the
coefficient from the right-hand side $f_{m}(t)=\langle\fv(t),\bpsi_m\rangle$ is not
continuous but just $L^{2}(0,T)$, hence one has to resort to an extension of the
Cauchy-Lipschitz theorem, with absolutely continuous functions, under Carath\'eodory
hypotheses (see Walter~\cite{Wal98}).
\medskip

Since the system for the coefficients $c_{n j}(t)$ is nonlinear (quadratic) we obtain that
there exists a unique solution $c_{n j}(t)\in W^{1,2}(0,T_n)$, for some $0 < T_n \leq T$.
\medskip

{\sl Step 2. Estimates}.  By taking $\vv^n$ as test function, one gets the identity
\begin{equation} 
  \label{RRB2-2.0}
    \frac{1}{2}  \dfrac{d}{dt} \left(\| \vv^n(t) \|^2  +  \alpha \| \sqrt{\ell}  D
      \vv^n(t) \|^2\right)  + \nu \| \nabla \vv^n(t) \|^2 = \langle {\bf
      f},\vv^{n}\rangle,  
\end{equation}
from which it follows
\begin{equation*} 
  \dfrac{d}{dt} \left(\| \vv^n(t) \|^2  +  \alpha \| \sqrt{\ell}  D \vv^n(t) \|^2\right)
  + \nu \| \nabla \vv^n(t) \|^2  \leq \dfrac{C_P}{\nu}\| {\bf f}\|^2_{-1/2,2},  
\end{equation*}
where $C_P = C_{P}(\Omega)$ is the Poincar\'e-type constant such that
\begin{equation*}
    \| u \|^{2}_{1/2,2} \leq C_{P} \|\nabla u \|^{2} \qquad \forall\, u \in H^{1}_{0}(\Omega).
\end{equation*}
Hence, integrating over $(0, t)$ for $t < T_n$ we get
\begin{equation}
  \label{RRB2-3}
  E^n(t)(\alpha, \ell) + \nu \int_0^t \| \nabla \vv^n(s) \|^2\,ds \leq E^n(0)(\alpha,
  \ell) + \dfrac{C_P}{\nu} \int_0^t \| {\bf f}(s)\|^2_{-1/2,2}\,ds. 
\end{equation}
where $E^n(t)(\alpha, \ell) := \| \vv^n(t) \|^2 + \alpha \| \sqrt{\ell} D
\vv^n(t)\|^2$. Next, we observe that since $\vv^n(0) \to \vv_{0}$ in $V$ and $0\leq \ell
\in C(\overline{\Omega})$, then it holds
\begin{equation*}
  \alpha \|  \sqrt{\ell} D \vv^n(0)\|^2 \leq \alpha \max_{x \in \overline{\Omega}}
  \ell(\x)\, \|\nabla\vv^n(0)\|^2 \leq  
  \alpha \max_{\x \in \overline{\Omega}} \ell(\x)\, \|\nabla\vv_{0}\|^2,
\end{equation*}
which shows that, under the given assumptions on $\vv_0$ and $\fv$ the r.h.s
of~(\ref{RRB2-3}) can be bounded independently of $n \in \N$ and consequently, a standard
continuation argument proves in fact that $T_n=T$. Moreover, it also holds
\begin{equation} 
  \label{eq:first-estimate}
  \vv^n \in L^\infty(0,T; H) \cap L^2(0,T; V) \quad \text{and} \quad \sqrt{\ell} D\vv^n
  \in L^\infty(0,T; L^2(\Omega)^9), 
\end{equation}
with norms bounded uniformly in $n\in\N$. Therefore, according to
Theorem~\ref{thm:main_estimate}, we also obtain
\begin{multline*} 
\| \vv^n(t) \|^2 + \| \vv^n(t) \|^2_{ V_{1/2} } + \int^t_0 \| \nabla \vv^n(s) \|^2 \,ds
\leq C \left [\int_0^t \| \fv (s) \|^2_{-1/2,2} ds + \| \vv_0 \|^2 + \|\nabla \vv_0 \|^2
\right ],
\end{multline*}
for a constant $C$ depending on $\nu, \alpha, \ell$ and $\Omega$. In addition, this
inequality proves that
\begin{equation*} 
  \vv^n \in L^\infty(0,T; V_{1/2}), 
\end{equation*}
with bounds independent of $n\in\N$.
\medskip

In order to give a proper meaning to the time derivative, we now use as test function
$\vv^n_t$, which is allowed, since it vanishes at the boundary and it is divergence-free.
We get
\begin{equation} 
  \label{RRB2-6}
  \| \vv^n_t(t) \|^2  +  \alpha \| \sqrt{\ell}  D \vv^n_t(t) \|^2 + \dfrac{\nu}{2}
  \dfrac{d}{dt} \| \nabla \vv^n(t) \|^2  = ({\bf f} , \vv^n_t) - ((\vv^n \cdot \nabla)\, \vv^n,
  \vv^n_t).  
\end{equation}
We estimate the r.h.s of~(\ref{RRB2-6}), thanks to the Cauchy-Schwarz,
H\"{o}lder, Young and Sobolev inequalities, which give us
$$
  |({\bf f}, \vv^n_t) |\leq C_{\epsilon} \| {\bf f}\|^2_{-1/2,2} + \epsilon \|
  \vv^n_t\|^2_{V_{1/2}}, 
$$
and
$$
  \vert ((\vv^n \cdot \nabla)\, \vv^n, \vv^n_t)\vert \leq \| \vv^n \|_6 \| \nabla\vv^n\|
  \| \vv^n_t \|_3
\leq C \| \nabla\vv^n \|^2  \| \vv^n_t \|_{V_{1/2}}
\leq C_\epsilon \| \nabla\vv^n \|^4  + \epsilon \| \vv^n_t \|^2_{V_{1/2} }. 
$$
By the above inequalities we can absorb terms in the l.h.s, to obtain
\begin{equation*} 
  C\| \vv^n_t(t) \|^2_{V_{1/2}} + \frac{\nu}{2}
  \dfrac{d}{dt} \|\nabla \vv^n(t)\|^2 \leq C_{\epsilon}\left[ \| {\bf f}(t)\|^2_{-1/2,2} +
    \| \nabla\vv^n(t) \|^4 \right],
\end{equation*}
for some $C_{\epsilon} = C(\ell, \alpha,\Omega)$. Integrating over $[0, s]$ for $s \in [0,T]$, one
obtains 
\begin{multline} 
  \label{RRB2-11} 
  C\int^s_0 \| \vv^n_t(t) \|^2_{V_{1/2}} \,dt + \dfrac{\nu}{2} \| \nabla \vv^n(s)\|^2 \leq
  \dfrac{\nu}{2} \| \nabla \vv^n(0) \|^2 + C_{\epsilon} \int^s_0 \| {\bf
    f}(t)\|^2_{-1/2,2} \,dt 
\\
  + C_\epsilon \int^s_0 \|\nabla \vv^n(t) \|^4 \,dt,
\end{multline}
hence
\begin{equation*}
  C\int^s_0 \| \vv^n_t(t) \|^2_{V_{1/2}} \,dt   + \dfrac{\nu}{2} \| \nabla \vv^n(s)\|^2
  \leq \dfrac{\nu}{2} \| \nabla \vv_0 \|^2 + C_{\epsilon} \left(\int^s_0 \| {\bf
      f}(t)\|^2_{-1/2,2} \,dt + \int^s_0 \| \nabla\vv^n(t) \|^4 \,dt \right), 
\end{equation*}
and in particular,
\begin{equation*}
    \dfrac{\nu}{2} \| \nabla \vv^n(s)\|^2  \leq \dfrac{\nu}{2} \| \nabla \vv_0 \|^2 +
    C_{\epsilon} \int^s_0 \| {\bf f}(t)\|^2_{-1/2,2} \,dt + C_\epsilon \int^s_0 \|
    \nabla \vv^n(t) \|^4 \,dt. 
\end{equation*}
We apply the Gronwall's lemma~\ref{lem:Gronwall} to get
\begin{equation} 
  \label{RRB2-13}
  \frac{\nu}{2} \| \nabla \vv^n(s) \|^2  \leq \left(\frac{\nu}{2}\|\nabla\vv_0\|^2 +
    C_{\epsilon}\int^s_0 \| {\bf f}(t)\|^2_{-1/2,2} \,dt \right)   \exp\left\{ C_\epsilon
    \int^s_0 \|\nabla \vv^n(t) \|^2 \,dt \right\},  
\end{equation}
and the r.h.s of~(\ref{RRB2-13}) is bounded uniformly in $n$ due the a priori
estimate~\eqref{eq:first-estimate}. This proves that 
\begin{equation*}
    \vv^n\in L^\infty(0,T; V),
\end{equation*}
from which we also deduce by using~\eqref{RRB2-11} that 
\begin{equation*}
    \vv^n_t \in L^2(0,T; V_{1/2}),  \quad \hbox{and therefore by~\eqref{eq:first-estimate}}
    \quad  \vv^n \in W^{1,2}(0,T;V_{1/2}),  
\end{equation*}
with uniform bounds in $n\in \N$. 
Beside estimates in $V_{1/2}$, it is important to stress that with the same track,
starting from~\eqref{RRB2-6} as in the proof of Lemma~\ref{lem:estimates}, we also have
\begin{equation*} 
  \sqrt \ell D \vv_t^n \in  L^2(0,T;L^2(\Om)^9),
\end{equation*}
again with uniform bound in $n\in\N$. 
 \medskip

 {\sl Step 3. Passing to the limit in the approximate equations}.  By the uniform bounds
 above and standard compactness results there exists $\vv \in W^{1,2}(0,T; V_{1/2} ) \cap
 L^\infty(0,T; V)$ and a sub-sequence (relabelled as $\vv^n$) such that
\begin{equation} 
\label{eq:convergence} 
\left \{ 
  \begin{aligned}
    \vv^n &\overset{*}{\rightharpoonup} \vv \qquad \qquad\, \text{in} \quad L^\infty(0,T; V),
    \\
    \sqrt{\ell} D\vv^n &\overset{*}{\rightharpoonup} \sqrt{\ell} D\vv \qquad\, \text{in} \quad
    L^\infty(0,T; L^2(\Omega)^9),
    \\
    \vv^n &\overset{}{\rightharpoonup} \vv \qquad \qquad \,\, \text{in} \quad L^p(0,T; V)
    \quad \text{ for all } 1 < p < \infty, 
    \\
    \vv^n_{t} &\overset{}{\rightharpoonup} \vv_{t} \qquad \qquad \text{in} \quad L^{2}(0,T; V_{1/2} ), 
    \\
    \sqrt{\ell} D\vv^n_t &\overset{}{\rightharpoonup} \sqrt{\ell} D\vv_t \qquad \text{in}
    \quad L^2(0,T; L^2(\Omega)^9),  
  \end{aligned} \right.
\end{equation}
To get strong convergence in appropriate spaces, we use the Aubin-Lions compactness lemma
(see~\cite{RT01}) with the triple
\begin{equation*}
    V \xhookrightarrow{} V_{3/4} \xhookrightarrow{} V_{1/2},
  \end{equation*}
  where $ V_{3/4} = [V, H]_{3/4}$, each embedding being dense and continuous. Moreover,
  since $\Omega$ is bounded by the Rellich-Kondrachov Theorem, these embeddings are also
  compact. Therefore, the sequence $(\vv^n)_{n \in \N}$ is (pre)compact in
  $L^{2}(0,T;V_{3/4})$ and (up to a sub-sequence)
\begin{equation*}
    \vv^n\rightarrow \vv \quad \text{in}\quad L^{2}(0,T; V_{3/4}),
\end{equation*}
which implies in particular strong convergence in $L^2(0,T;L^4(\Omega)^{3})$. By standard
results this allows to pass to the limit in the weak formulation, showing that indeed
$\vv$ is a regular-weak solution. We skip the details. It remains to check the initial
data. The weak convergence implies that for $0\leq t\leq T$
\begin{equation*}
  \| \vv(t) \|^2  +  \alpha \| \sqrt{\ell}  D \vv(t) \|^2   +\nu\int_0^t \| \nabla
  \vv(s) \|^2\,ds  
  \leq\| \vv(0) \|^2  +  \alpha \| \sqrt{\ell}  D \vv(0) \|^2 + \int_0^t \langle{\bf
    f}(s),\vv(s)\rangle\,ds. 
\end{equation*}
Observe that the above inequality is obtained from~\eqref{RRB2-2.0}, after integration in
time and passing to the limit. The inequality comes from the fact that $\nabla \vv^n
\rightharpoonup \nabla \vv$ in $L^2(0,T; L^2(\Omega)^9)$, and we have to consider the
inferior limit of the norm. In particular, we observe that since $\nabla \vv^n(0) \to
\nabla \vv_0$ in $L^2(\Omega)$, we can suppose, up to a further sub-sequence that $\nabla
\vv^n(0, \x) \to \nabla\vv_0(\x)$ a.e. $\x \in \Omega$, hence using the boundedness of $\ell$
and Lebesgue dominated convergence, we have
\begin{equation*}
    \|\sqrt{\ell} D\vv^n(0)\|^2 \to \|\sqrt{\ell} D\vv_0\|^2,
\end{equation*}
showing also the correct limit at the initial time.
\medskip

{\sl Step 4. Energy balance and uniqueness.} We start with the energy
balance~\eqref{eq:energy_balance}. To this end one has first to justify the use of $\vv$
as test function. From the results above, we deduce that $\vv \otimes \vv \in L^\infty
(0,T; L^3(\Om)^9)$ which yields in particular $(\vv \cdot \g) \vv \in L^2(0,T; V')$ and
$\langle (\vv \cdot \g) \vv, \vv \rangle = 0$ according to standard results. From there,
the relevant point is to check that for any $s \in [0, T]$:
\begin{equation} 
\label{eq:test-with-v}
  \int_0^s  (\ell D \vv_t, D\vv) \,dt = \frac{1}{2}\|\sqrt{\ell} D\vv(s)\|^2 -
  \frac{1}{2}\|\sqrt{\ell} D\vv_0\|^2, 
\end{equation}
since all other terms are well-behaved due to the available regularity of $\vv$. However,
$\sqrt \ell D \vv, \sqrt \ell D \vv_t \in L^2(0,T; L^2(\Omega)^9)$. Therefore, by
identifying $L^2(\Omega)^9$ with its dual space, we naturally have
\begin{equation*}
  (\ell D \vv_t, D\vv) =  \langle \sqrt \ell D \vv_t, \sqrt \ell D \vv \rangle = \frac{1}{2}{d
    \over dt} \| \sqrt \ell D \vv \|^2, 
  \end{equation*}
hence~\eqref{eq:test-with-v} and then~\eqref{eq:energy_balance} follows. 
\medskip

Moreover, this result allows us also to prove uniqueness of regular-weak solutions. In
fact, if $\vv_1$ and $\vv_2$ are solutions corresponding to the same initial datum and
same external force, taking the difference and testing (by the above argument this is
fully justified) with $\bvv = \vv_1 -\vv_2$ one obtains the following differential equality
for the difference for any $t \in [0, T]$:
\begin{equation*}
    \| \bvv(t) \|^2 + \alpha \| \sqrt{\ell}  D \bvv(t) \|^2 + \nu\int_0^t \| \nabla \bvv
    \|^2\,ds = -\int_0^t\int_\Omega(\bvv\cdot \nabla)\,\, \vv_2 \cdot \bvv \,d{\bf x} ds.  
\end{equation*}
Hence, by the usual Sobolev inequalities
\begin{equation*}
    \| \bvv(t) \|^2 + \alpha \| \sqrt{\ell}  D \bvv(t) \|^2 + \nu\int_0^t \| \nabla
    \bvv\|^2\,ds \leq \frac{\nu}{2}\int_0^t\|\nabla\bvv\|^2 \,ds + \frac{C}{\nu}
    \int_0^t\|\nabla\vv_2\|^4 \|\bvv\|^2 \,ds,  
\end{equation*}
and since $\bvv(0) = \mathbf{0}$ the Gronwall's lemma shows that $\bvv \equiv \mathbf{0}$,
due to the fact that
\begin{equation*}
    \nabla \vv_2 \in L^\infty(0,T; L^2(\Omega)^9) \subset L^4(0,T; L^{2}(\Omega)^9).
  \end{equation*}
\end{proof}
\begin{Remark} 
  The pressure is not involved in Definition~\ref{def:reg_w_sol}. However, let $(\vv_0,
  \fv)$ be given as in Theorem~\ref{thm:existence} and $\vv$ the corresponding
  regular-weak solution. Then by the De Rham theorem, we easily deduce the existence of $p
  \in {\cal D}' (0, T; L^2(\Om)/\R)$ such that $(\vv, p)$ satisfies
  System~\eqref{eq:Gen-VoigtNSE} in the sense of the distributions. The regularity of the
  pressure is probably even better than that, but this point remains to be investigated.
\end{Remark}
\begin{Remark} 
  Definition~\ref{def:reg_w_sol} is equivalent to the following: The field $\vv$ is a
  regular-weak solution to~\eqref{eq:Gen-VoigtNSE} if:
\begin{enumerate}
\item $\vv \in W^{1, 2} (0, T; V_{1/2}) \cap L^\infty (0,T; V)$, $\sqrt \ell D \vv_t \in L^2(Q_T)^9$,
\item for all $\wv \in L^2(0, T; V)$, $\forall \, s < T$:
\begin{multline*}
    \int_0^s (\vv_t, \wv) \,dt + \alpha \int_0^s  (\sqrt \ell D \vv_t, \sqrt \ell D\wv)
    \,dt - \int_0^s \int_\Om \vv \otimes \vv : \g \wv \, d\x dt 
    \\+ 
    \nu \int_0^s \int_\Om \g \vv : \g \wv  \, d\x dt =  \int_0^s \langle \fv, \wv\rangle \,dt,
\end{multline*}
\item $\lim_{t \to 0^+} \| \vv(t) - \vv_0 \|_{V_{1/2}} = 0$. 
\end{enumerate}
\end{Remark}
Once the above results of existence and uniqueness have been proved for the generalized
Navier-Stokes-Voigt equations, it is straightforward to prove the same also for the model with an
additional turbulent viscosity $\nut$ which is non-negative and bounded. We do not
reproduce here the proof, but we just present the summary as follows:
\begin{Remark} 
\label{rem:generalisation} Let $\nut \in L^\infty ([0, \infty[ \times \Om)$ such that
$\nut \ge 0$ {\sl a.e.} in $[0, \infty[ \times \Om$. We consider the initial problem
resulting from Section~\ref{sec:modelling}, with an eddy viscosity term: 
\begin{equation} \label{eq:Gen-VoigtNSE_eddy} \left\{
\begin{aligned}
    \vv_t  - \alpha \div (\ell \,  D \vv_t) + (\vv \cdot \nabla)\, \vv - \nu \Delta \vv -
    \div(\nut D \vv) + \nabla p  &= {\bf f}\quad\text{in }(0, T) \times \Om,  
    \\ 
    \div \vv &= 0 \quad\text{in }(0, T) \times \Om, 
    \\ 
    \vv \vert_\Ga &= 0 \quad\text{on }(0, T) \times \Gamma, 
    \\ 
    \vv_{t=0} &= \vv_0 \quad\text{in }\Om. 
\end{aligned}
\right. 
\end{equation}
We express  the additional eddy viscosity term $-\div (\nut D \vv)$ by 
\begin{equation*}
    - \langle \div (\nut D \vv) , \wv \rangle = (\nut D \vv, D \wv).
  \end{equation*}
  Regarding the conditions about $\nut$, the generalization of Theorem~\ref{thm:existence}
  to Problem~\eqref{eq:Gen-VoigtNSE_eddy} is straightforward, and $\vv_0\in V$ and $\fv
  \in L^2(0,T;H^{-1/2}(\Omega)^3)$ being given,~\eqref{eq:Gen-VoigtNSE_eddy} has a unique 
  regular-weak solution that satisfies the energy balance\footnote{Remind that since $\div
    \vv = 0$, then $\Delta \vv = 2 \div D\vv$. Therefore, $\langle - \nu \Delta \vv + \div
    (\nut D \vv), \wv \rangle = ( (2 \nu + \nut) D\vv, D \wv)$.}
\begin{equation*} 
  E(t)(\alpha, \ell) + \int_0^t  \| (2 \nu + \nut)^{1 /2} D \vv(s) \|^2\,ds = E(0)(\alpha,
  \ell)  + \int_0^t\langle{\bf  f}(s),\vv(s)\rangle \,ds, 
\end{equation*}
where again $E(t)(\alpha, \ell) = \dfrac{1}{2}  \left(\| \vv(t) \|^2  +  \alpha \|
  \sqrt{\ell}  D \vv(t) \|^2 \right)$.
\end{Remark}  
\section{Turbulent Voigt model involving the TKE}
\label{sec:TKE}
In this section we consider the generalized Voigt model with turbulent viscosity, coupled
with the equation for the turbulent kinetic energy, and in particular we prove a
compactness result which allows to prove existence of weak solutions.
\subsection{A compactness Lemma} 
We consider a family of models as in~\eqref{eq:Gen-VoigtNSE_eddy}, associated with
different realizations of the turbulent viscosity and study the behavior of the solutions,
under mild conditions on the given additional viscosities.

To this end let be given $(\nut^n)_{n \in \N}$ such that
\begin{equation*}
  \forall \, n \ge 0, \qquad  \nut^n \in L^\infty ([0, \infty[ \times \Om), \quad \nut^n
  \ge 0 \, \hbox{ {\sl a.e.} in } [0, \infty[ \times \Om. 
\end{equation*}
Let $\vv_0 \in V$ and $\fv \in L^2(0,T; H^{-1/2}(\Omega)^3)$. Let $(\vv^n, p^n)$ finally denote
the distributional solution to
\begin{equation} 
  \label{eq:Gen-VoigtNSE_k} 
  \left\{ \begin{aligned}
      \vv^n_t - \alpha \div
      (\ell \, D \vv^n_t) +(\vv^n \cdot \nabla)\, \vv^n - \nu \Delta \vv^n - \div (\nut^n
      D \vv^n) + \nabla p^n &= {\bf f}\quad\text{in }(0, T) \times \Om,
      \\
      \div \vv^n &= 0 \quad\text{in }(0, T) \times \Om,
      \\
      \vv^n \vert_\Ga &= 0 \quad\text{on }(0,T)\times\Gamma, 
      \\
      \vv^n_{t=0} &= \vv_0 \quad\text{in } \Om,
\end{aligned} \right. 
\end{equation}
and such that $\vv^n$ is a regular-weak solution to~\eqref{eq:Gen-VoigtNSE_k}.
 
Concerning the behavior of the solutions $\vv^{n}$ we have the following lemma.
\begin{lemma} 
  \label{thm:compact} 
  Assume that the sequence $(\nut^n)_{n \in \N}$ is uniformly bounded in $L^\infty ([0,
  \infty[ \times \Om)$ and converges almost everywhere to $\nut$ in $Q_\infty = [0,
  \infty[ \times \Om$. 

Then, it follows that: 
\begin{enumerate}[1)]
\item The sequence $(\vv^n)_{n \in \N}$ weakly converges in $W^{1,2}(0,T; V_{1/2}) \cap
  L^p(0,T; V)$, for all $p < \infty$, to a regular-weak solution $\vv$ of the limit
  problem
\begin{equation} 
  \label{eq:Gen-VoigtNSE_limit} 
  \left \{
    \begin{aligned}
      \vv_t  - \alpha \div (\ell \,  D \vv_t) + (\vv \cdot \nabla)\, \vv - \nu \Delta \vv -
      \div (\nut D \vv) + \nabla p &= {\bf f}\quad\text{in }(0, T) \times \Om,  
      \\ 
      \div \vv &= 0 \quad\text{in }(0, T) \times \Om,  
      \\ 
      \vv \vert_\Ga &= 0 \quad\text{on }(0, T) \times \Gamma,   
      \\ 
      \vv_{t=0} &= \vv_0 \quad\text{in } \Om.
    \end{aligned}
  \right. 
\end{equation}
\item The sequence $(\nut^n | D\vv^n |^2 )_{n \in \N}$ converges in the sense of measures
  to $\nut | D \vv |^2$ in $Q_T$, which means that
  \begin{equation} 
    \label{eq:conv_energy} 
    \forall \, \varphi \, \in C (\overline Q_T),
    \quad \int_0^T\int_{\Omega} \nut^n | D\vv^n |^2 \varphi \, d\x dt \underset{n \to
      \infty}{\longrightarrow} \int_0^T\int_{\Omega} \nut| D\vv |^2 \varphi \, d\x dt.
  \end{equation} 
\end{enumerate}
\end{lemma}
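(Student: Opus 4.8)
The plan is to run the weak-compactness scheme of Theorem~\ref{thm:existence} for the whole family $(\vv^n)$ simultaneously, and then to add a genuinely new energy-convergence argument that turns the weak convergence of the symmetric gradients into strong $L^2(Q_T)$ convergence; item~2) will be an immediate corollary. First I would collect uniform a priori bounds. All the $\vv^n$ solve~\eqref{eq:Gen-VoigtNSE_k} with the \emph{same} data $\vv_0\in V$ and $\fv\in L^2(0,T;H^{-1/2}(\Om)^3)$, and $0\le\nut^n\le M:=\sup_n\|\nut^n\|_{L^\infty(Q_\infty)}<\infty$; hence, repeating the estimates of Lemma~\ref{lem:estimates}, Theorem~\ref{thm:existence} and Remark~\ref{rem:generalisation}, the resulting bounds depend on $\nut^n$ only through $M$ (and on $\vv_0,\fv,\nu,\alpha,\ell,\Om$), and are therefore uniform in $n$. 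This yields $(\vv^n)$ bounded in $L^\infty(0,T;V)\cap W^{1,2}(0,T;V_{1/2})$, together with uniform bounds for $\sqrt\ell\,D\vv^n_t$ and $\sqrt{\nut^n}\,D\vv^n$ in $L^2(Q_T)^9$. Passing to a subsequence I extract $\vv^n\overset{*}{\rightharpoonup}\vv$ in $L^\infty(0,T;V)$, $\vv^n\rightharpoonup\vv$ in $L^p(0,T;V)$ for all $p<\infty$, $\vv^n_t\rightharpoonup\vv_t$ in $L^2(0,T;V_{1/2})$, and in particular $D\vv^n\rightharpoonup D\vv$ in $L^2(Q_T)^9$.

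To identify the limit I would pass to the limit in the weak formulation exactly as in Step~3 of Theorem~\ref{thm:existence}. The Aubin--Lions lemma applied to the triple $V\hookrightarrow V_{3/4}\hookrightarrow V_{1/2}$ (compact first embedding) gives $\vv^n\to\vv$ strongly in $L^2(0,T;V_{3/4})\hookrightarrow L^2(0,T;L^4(\Om)^3)$, which disposes of the convective term. For the eddy term only weak convergence is needed at this stage: since $\nut^n\to\nut$ a.e.\ with $|\nut^n|\le M$, dominated convergence gives $\nut^n\psi\to\nut\psi$ strongly in $L^2(Q_T)^9$ for every $\psi\in L^2(Q_T)^9$, and combined with $D\vv^n\rightharpoonup D\vv$ the strong--times--weak principle yields $\nut^n D\vv^n\rightharpoonup\nut D\vv$ in $L^2(Q_T)^9$. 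Passing to the limit and checking the initial datum as in Theorem~\ref{thm:existence} shows that $\vv$ is a regular-weak solution of~\eqref{eq:Gen-VoigtNSE_limit}; since by Remark~\ref{rem:generalisation} this problem has a \emph{unique} regular-weak solution, the limit is independent of the subsequence and the whole sequence converges, which is item~1). Moreover, $\vv$ satisfies the corresponding energy balance.

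The core of the lemma is item~2), and the key is to control the total dissipation
\[
a^n:=\int_0^T\!\!\int_\Om(2\nu+\nut^n)|D\vv^n|^2\,d\x\,dt=X^n+Y^n,
\]
with $X^n:=2\nu\int_0^T\|D\vv^n(t)\|^2\,dt$ and $Y^n:=\int_0^T\!\int_\Om\nut^n|D\vv^n|^2\,d\x\,dt$, by squeezing an upper bound from the energy balance against a lower bound from weak lower semicontinuity. Evaluating the energy balance of Remark~\ref{rem:generalisation} at $t=T$ gives $a^n=E^n(0)-E^n(T)+\int_0^T\langle\fv,\vv^n\rangle\,dt$. Here $E^n(0)=E(0)$ is constant in $n$ because $\vv^n(0)=\vv_0$; $\int_0^T\langle\fv,\vv^n\rangle\,dt\to\int_0^T\langle\fv,\vv\rangle\,dt$ by weak convergence; and the weak continuity in time of the solutions (with values in $V$) gives $\vv^n(T)\rightharpoonup\vv(T)$, hence $\liminf_n E^n(T)\ge E(T)$ by lower semicontinuity of the norms. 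Therefore $\limsup_n a^n\le E(0)-E(T)+\int_0^T\langle\fv,\vv\rangle\,dt=a$, the last equality being the energy balance for $\vv$. Conversely $\sqrt{\nut^n}D\vv^n\rightharpoonup\sqrt\nut D\vv$ in $L^2(Q_T)^9$ (same strong--times--weak argument with $\sqrt{\nut^n}\to\sqrt\nut$ a.e.), so weak lower semicontinuity yields $\liminf_n X^n\ge X_*:=2\nu\int_0^T\|D\vv(t)\|^2\,dt$ and $\liminf_n Y^n\ge Y_*:=\int_0^T\!\int_\Om\nut|D\vv|^2$, with $X_*+Y_*=a$. The chain $X_*+Y_*\le\liminf a^n\le\limsup a^n\le a=X_*+Y_*$ forces $a^n\to a$; then $\limsup_n X^n=a-\liminf_n Y^n\le a-Y_*=X_*$ and symmetrically $\limsup_n Y^n\le Y_*$, so $X^n\to X_*$ and $Y^n\to Y_*$ separately.

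Finally, $X^n\to X_*$ means $\|D\vv^n\|_{L^2(Q_T)}\to\|D\vv\|_{L^2(Q_T)}$, which together with $D\vv^n\rightharpoonup D\vv$ upgrades to strong convergence $D\vv^n\to D\vv$ in $L^2(Q_T)^9$. For $\varphi\in C(\overline Q_T)$ I then write $\nut^n|D\vv^n|^2-\nut|D\vv|^2=(\nut^n-\nut)|D\vv^n|^2+\nut\,(|D\vv^n|^2-|D\vv|^2)$: the second contribution is controlled by $\||D\vv^n|^2-|D\vv|^2\|_{L^1(Q_T)}\le\|D\vv^n-D\vv\|_{L^2(Q_T)}\|D\vv^n+D\vv\|_{L^2(Q_T)}\to0$, while the first is handled by the further split $|D\vv^n|^2=|D\vv|^2+(|D\vv^n|^2-|D\vv|^2)$, using dominated convergence on $(\nut^n-\nut)|D\vv|^2\varphi$ and the uniform bound $|\nut^n-\nut|\le 2M$ on the remainder. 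This gives~\eqref{eq:conv_energy} and proves item~2). The main obstacle is precisely the absence of strong gradient convergence from the outset (the very reason why the naive coupling only produces a variational inequality for $k$, as recalled in the introduction); all the difficulty is concentrated in the energy squeeze above, which succeeds thanks to two structural facts: the $\vv^n$ share the common initial energy $E(0)$, and the rough coefficients $\nut^n$ pass to the limit weakly against $D\vv^n$ by virtue of their a.e.\ convergence and uniform bound.
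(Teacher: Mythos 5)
Your argument is correct, and item 1) follows the paper's proof essentially verbatim (uniform bounds from Theorem~\ref{thm:existence} depending on $\nut^n$ only through its $L^\infty$ bound, Aubin--Lions for the convective term, dominated convergence times weak convergence for the eddy term, uniqueness of the limit problem to upgrade to the full sequence). For item 2), however, you take a genuinely different route at the crucial ``energy method'' step. The paper integrates the energy equalities once more in time, producing the weight $(T-t)$ in front of $\int_\Omega |A_n|^2$, so that the troublesome endpoint term $E^n(T)$ is replaced by $\int_0^T E^n(t)\,dt$, whose convergence follows from the already-established strong convergence of $\vv^n$ in $L^2(0,T;V_{3/4})$ together with keeping $\alpha\ell|D\vv^n|^2$ inside the quantity whose norm converges; it then concludes strong convergence of $(2\nu+\nut^n)^{1/2}D\vv^n$ only on $Q_{T'}$ for $T'<T$ (the weight vanishes at $t=T$) and finishes with the inverse dominated convergence theorem. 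You instead evaluate the energy balance at $t=T$, use $\vv^n(T)\rightharpoonup\vv(T)$ in $V$ and weak lower semicontinuity to get $\limsup_n a^n\le a$, and then run a clean squeeze that forces $X^n\to X_*$ and $Y^n\to Y_*$ separately; this buys you strong convergence of $D\vv^n$ on all of $Q_T$ at once and lets you finish item 2) by a purely algebraic splitting (Cauchy--Schwarz plus dominated convergence) without extracting an a.e.-convergent dominated subsequence. The one step you should justify explicitly is $\vv^n(T)\rightharpoonup\vv(T)$ in $V$: it follows from the uniform bound of $\vv^n(T)$ in $V$ combined with $\vv^n(T)=\vv_0+\int_0^T\vv^n_t\,dt\rightharpoonup \vv(T)$ in $V_{1/2}$ (using $\vv^n_t\rightharpoonup\vv_t$ in $L^2(0,T;V_{1/2})$), which identifies the weak $V$-limit; similarly $\sqrt{\ell}\,D\vv^n(T)\rightharpoonup\sqrt{\ell}\,D\vv(T)$ in $L^2$ since $\sqrt{\ell}$ is bounded, so the lower semicontinuity of $E^n(T)$ is legitimate. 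With that line added, your proof is complete and arguably sharper than the paper's in that it avoids both the $(T-t)$ weight and the final subsequence extraction.
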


\begin{proof}
  In order to simplify the notation we extract sub-sequences, without changing the
  notation. However, by the uniqueness result of Theorem~\ref{thm:existence}, we finally
  get convergence for the whole sequence because of the uniqueness of solutions to the
  limit problem.
  \medskip

  1) As $\nut \ge 0$ and $\nut \in L^\infty$, we can repeat the proof of
  Theorem~\ref{thm:existence}, which yields the existence of a unique $\vv \in
  W^{1,2}(0,T;V_{1/2}) \cap L^\infty (0,T;V)$. Moreover, Theorem~\ref{thm:existence} shows
  also that each of the approximating problem has a unique solution $\vv^{n}$ such that
  the sequence of their solutions verifies~\eqref{eq:convergence}, with compactness in
  $L^2(0, T; V_{3/4})$. Passing to the limit in the equations is straightforward, except
  in the eddy viscosity term. To this end let be given $\wv \in L^2(0,T; V)$, we can write
\begin{equation*}
  \langle -\div (\nut^n D \vv^n), \wv \rangle=
  \int_{0}^{T}\int_{\Omega} \nut^n D\vv^n :
  D\wv \, d\x ds=\int_{0}^{T}\int_{\Omega} D\vv^n :
  \nut^n D\wv \, d\x ds. 
  \end{equation*}
  As $(\nut^n)_{n \in \N}$ is bounded in $L^\infty (Q_T)$, we have on one hand the
  following bound
\begin{equation*}
  | \nut^n D\wv | \le \sup_{n\in\N} \| \nut^n \|_{L^{\infty}_{t,\x}} | D \wv | \in L^2(Q_T),
  \end{equation*}
and the other hand, according to the {\sl a.e}  convergence of $\nut^n$, it follows also 
\begin{equation*}
    \nut^n D\wv  \to \nut D\wv \quad \hbox{ {\sl a.e }  in } Q_T.
  \end{equation*}
%
Then, by the Lebesgue dominated convergence theorem, one has that
\begin{equation*}
    \nut^n D\wv  \to \nut D\wv \quad \hbox{ in } L^2(Q_T).
  \end{equation*}
The convergence of the eddy viscosity term then follows from 
\begin{equation*}
    D\vv^n {\rightharpoonup} D \vv \quad \hbox{ in } L^2(Q_T),
  \end{equation*}
leading to 
\begin{equation*}
    \int_0^T\int_{\Omega} D\vv^n : \nut^n D\wv \, d\x ds \to   \int_0^T\int_{\Omega}
 D\vv : \nut D\wv \, d\x ds=   \langle - \div (\nut D \vv), \wv \rangle,
  \end{equation*}
  as $n \to \infty$. As a consequence, $\vv$ is indeed a regular-weak solution
  to~\eqref{eq:Gen-VoigtNSE_limit} on $[0,T]$, for all positive $T$.
 \medskip

 2) We split the proof into three steps:
\begin{enumerate}[i)]
\item \label{it:point_i} Weak convergence in $L^2(Q_{T})$ of the sequence 
  $( (2 \nu +\nut^n)^{1/2} D \vv^n)_{n \in \N}$ to $(2 \nu + \nut)^{1/2} D \vv$; 
\item \label{it:point_ii} Strong convergence by the ``energy method''; 
\item \label{it:point_iii}  Proof of the convergence in measures from~\eqref{eq:conv_energy}.
\end{enumerate}
\ref{it:point_i}) We already proved that the sequence $( (2 \nu + \nut^n)^{1/2} D
\vv^n)_{n \in \N}$ is bounded in $L^2(Q_T)^9$, uniformly in $n\in\N$. Moreover, we already
know that $D\vv^n {\rightharpoonup} D \vv$ in $L^2(Q_T)$. Let
\begin{equation*}
    A_n := (2 \nu + \nut^n)^{1/2} D \vv^n \qquad \text{ and } \quad A := (2 \nu +
    \nut)^{1/2} D \vv. 
  \end{equation*}
  We aim to prove that $A_n {\rightharpoonup} A$ in $L^2(Q_T)^9$. To do so, let us fix $B
  \in L^2(Q_T)^9$. By the hypotheses on $(\nut^n)_{n \in \N}$ it follows that
\begin{equation*}
    (2 \nu + \nut^n)^{1/2} B \to (2 \nu + \nut)^{1/2} B \quad  \text{ a.e. in }  Q_T.
  \end{equation*}
Moreover, one has also
\begin{equation*}
    | (2 \nu + \nut^n)^{1/2} B | \le C \left(2 \nu + \sup_n \|  \nut^n \|_\infty \right) | B | \in L^2 (Q_T).
  \end{equation*}
Therefore, again by Lebesgue's theorem we obtain 
\begin{equation*}
    (2 \nu + \nut^n)^{1/2} B \to (2 \nu + \nut)^{1/2} B \quad \text{ in }  L^2 (Q_T),
  \end{equation*}
hence
\begin{equation*}
    \int_{0}^{T} \int_{\Omega} (2 \nu + \nut^n)^{1/2} B: D \vv^n\,d\x dt \to  \int_{0}^{T}
    \int_{\Omega} (2 \nu + \nut)^{1/2} B: D \vv\,d\x dt, 
  \end{equation*}
yielding the desired weak convergence. 

\medskip

\ref{it:point_ii}) {\sl Energy method}.  We now prove the strong $L^{2}$-convergence of
the sequence $\suite A n$ to $A$. To do so, we use the energy method (see~\cite{CL14,
  MR1418142}), based on the energy (equality) balance~\eqref{eq:energy_balance} satisfied
by both $\vv^n$ and $\vv$, with the eddy viscosity terms
\begin{equation*}
    \int \int_{Q_t}  \nut | D \vv |^2 \qquad\text{and}\qquad \int \int_{Q_t}  \nut^n |D \vv^n |^2,
  \end{equation*}
in the corresponding  equation. This  means, to consider for all $t <T$, 
\begin{equation}  
  \label{energy*}
  \left\{ 
    \begin{aligned}
      E(t)(\alpha, \ell) + \int_0^t \int_\Om | A |^2 \,d\x ds &= \int_0^t \langle \fv, \vv
      \rangle \,ds +  E(0)(\alpha, \ell),  
      \\ 
      E^n(t)(\alpha, \ell) + \int_0^t \int_\Om | A_n |^2 \,d\x ds &= \int_0^t \langle \fv ,
      \vv^n\rangle \,ds + E^n(0)(\alpha, \ell), 
    \end{aligned}
  \right.
\end{equation}
where, as usual,
\begin{align*}
  E(t)(\alpha, \ell) &= \dfrac{1}{2} \left(\| \vv(t) \|^2 + \alpha \| \sqrt \ell D \vv(t)\|^2\right),
    \\
    E^n(t)(\alpha, \ell) &= \dfrac{1}{2} \left(\| \vv^n(t) \|^2 + \alpha \| \sqrt \ell D
      \vv^n(t)\|^2\right). 
\end{align*}
A critical tool is that of integrating with the respect to the time variable each equation
in~(\ref{energy*}) over $[0,T]$ and to perform then an integration by parts. This yields the
following two equalities
\begin{equation*} 
  \left\{
    \begin{aligned}
      \int_0^T E(t)(\alpha, \ell) \,dt + \int_0^T \int_\Om (T-t)  | A |^2 \, d\x dt &=
      \int_0^T  \int_0^t \langle \fv , \vv \rangle \,ds dt + T E(0)(\alpha, \ell),  
      \\
      \int_0^T E^n(t)(\alpha, \ell)\,dt + \int_0^T \int_\Om (T-t)  | A_n |^2 \, d\x dt &=
      \int_0^T  \int_0^t \langle \fv , \vv^n \rangle \,ds dt + T E^n(0)(\alpha, \ell). 
    \end{aligned}
  \right.
\end{equation*}
Arguing with the usual compactness tools as in the proof of the previous theorems we
obtain  that
\begin{equation*}
  \int_0^T \| \vv^n (t) \|^2 dt  \to \int_0^T \| \vv (t) \|^2 \,dt,
\end{equation*}
as well as 
\begin{equation*}
\begin{aligned}
    \int_0^T  \int_0^t \langle \fv , \vv^n \rangle \,ds dt  &\to  \int_0^T  \int_0^t
    \langle \fv , \vv \rangle \,ds dt 
    \\
    T E^n(0)(\alpha, \ell) &\to T E(0)(\alpha, \ell),
\end{aligned}
\end{equation*}
as $n \to \infty$. Therefore, by using the integrated energy equalities, we also get by
comparison
\begin{equation*} 
  \int_0^T  \int_\Om \left[\alpha \ell |D \vv^n |^2 + (T-t)  | A_n |^2 \right] \, d\x dt
  \underset{n \to \infty}{\longrightarrow}  \int_0^T \int_\Om \left[\alpha \ell |D \vv |^2
    + (T-t)| A |^2 \right] \, d\x dt. 
\end{equation*}
Let now $B_n$ be defined as follows 
  \begin{equation*}
B_n := (\alpha\ell + (T-t) (2 \nu + \nut^n))^{1/2} D \vv^n.
\end{equation*}
By the weak convergence result as in the previous steps, we immediately conclude that
\begin{equation*}
  B_n  \to B = (\alpha \ell + (T-t) (2 \nu + \nut))^{1/2} D \vv\quad \hbox{in }
  L^2(Q_T)^9, 
\end{equation*} 
which yields the convergence of $A_n$ to $A$ in $L^2(Q_{T'})$ for all $T'<T$. As $T$ can
be any positive time, this concludes this step.  \medskip

\ref{it:point_iii}) {\sl Proof of~\eqref{eq:conv_energy}}. By the ``Lebesgue inverse
Theorem,'' since $A_{n}\to A$ in $L^2(Q_T)$, we can extract sub-sequence, still denoted by $\suite A n$,
which converges to $A$ almost everywhere in $Q_T$, and such that there exists $G \in
L^2(Q_T)$ which satisfies
\begin{equation} 
  \label{eq:lebesgue}  
  | A (t, \x) | \le G(t,\x)  \quad \hbox{\rm { \sl a.e.} in } \, Q_T. 
\end{equation}
Let $\varphi \in C(\overline{Q_T})$, $\varphi \ge 0$, be fixed and set
\begin{equation*}
    B_n := \sqrt \varphi \sqrt {\nut^n} D \vv^n \qquad \text{ and } \qquad  B = \sqrt
    \varphi \sqrt {\nut} D \vv. 
  \end{equation*}
By using the definition of $A_{n}$ we get
\begin{equation*}
    B_n = \sqrt \varphi { \sqrt {\nut^n}  \over (2 \nu + \nut^n)^{1/2} } A_n \qquad \text{
      and } \qquad B = \sqrt \varphi { \sqrt {\nut}  \over (2 \nu + \nut)^{1/2} } A. 
  \end{equation*}
  Obviously, it follows that $B_n \to B$ {\sl a.e.} in $Q_T$, and by~\eqref{eq:lebesgue},
\begin{equation*}
    | B_n(t,\x)| \le \dfrac{1}{2 \nu}  \| \varphi \|_\infty^{1 / 2} \sup_n \| \nut^n
    \|_\infty^{1 / 2}  \,G(t,\x) \in L^2(Q_T). 
  \end{equation*}
  Therefore, $B_n \to B$ in $L^2(Q_T)$, hence~\eqref{eq:conv_energy} follows for all
  non-negative $\varphi$. The proof for all $\varphi \in C(\overline{Q_T})$ follows by
  using the splitting $\varphi = \varphi^+ - \varphi^-$, where $\varphi^+, \varphi^- \ge
  0$.
 \end{proof} 
\subsection{Application to the NSTKE-Voigt model} 
We now apply the existence result together with the compactness lemma to study the Voigt
model coupled with the equation of the turbulent kinetic energy.  The NSTKE-Voigt model is
in fact obtained by coupling the turbulent Navier-Stokes-Voigt equation to the equation
for the TKE, following the law~\eqref{eq:eddy_k}, which gives the following system:
\begin{equation} \label{eq:Gen-VoigtNSE_NSTKE} \left\{ \begin{array}{ll} \vv_t - \alpha
      \div (\ell \, D \vv_t) +(\vv \cdot \nabla)\, \vv - \nu \Delta \vv - \div (\nut (k) D
      \vv) + \nabla p = {\bf f}, & \quad (i)
      \\
      \div \vv = 0, & \quad (ii)
      \\
      \vv\vert_\Ga = 0, & \quad (iii)
      \\
      \vv_{t=0} = \vv_0, & \quad (iv)
      \\
      k_t + \vv \cdot \g k - \div( \mut (k) \g k) = \nut(k) | D \vv |^2 - (\ell+\eta)^{-1}
      k \sqrt{|k|}, & \quad (v)
      \\
      k \vert_\Ga = 0, & \quad (vi)
      \\
      k_{t=0} = k_0. & \quad (vii)
\end{array} \right. 
\end{equation}
This system calls for two comments: 
\begin{enumerate}[1)]
\item According to Lemma~\ref{thm:compact}, we know how to deal with bounded eddy
  viscosities and not better. This is why we cannot take the law~\eqref{eq:eddy_k} that we
  replace, as often in this class of problems, by
\begin{equation} 
  \label{eq:form_of_nut} 
  \nut (k) = \ell \,  T_N (\sqrt { | k |} ), 
\end{equation}
where $T_N$ is the usual ``truncation function'' at height $N$, for a given large $N \in \N$, which
is defined by for all $x \in \R$ 
\begin{equation*}
  T_N(x) := 
  \begin{cases} 
    x &\text{if } |x| \leq N,
    \\
    N \frac{x}{|x|}  &\text{if } |x| > N.
  \end{cases}
\end{equation*}
The eddy viscosity~\eqref{eq:form_of_nut} has the structure of that considered in
Lemma~\ref{thm:compact} where $\nut (k) = \ell\rho(k)$, with $\rho(k) = T_N (\sqrt { | k |} )$. Similarly, we
assume that the viscosity coefficient for the kinetic energy satisfies
\begin{equation} 
\label{eq:form_of_mut}  
\mut (k) = C \ell \,  T_{N'} (\sqrt { | k |} ), 
\end{equation}
for some dimensionless constant $C$ and another $N' \in \N$. 

\item Usually, the dissipation term in the r.h.s of the equation for $k$ should be $\E :=
  \ell^{-1} k \sqrt{|k|}$. Unfortunately, due to the degeneration of $\ell$ at the
  boundary $\Ga$, there could be further issues when passing to the limit in this term. As
  a precaution, we have approximated it by $\E= (\ell+\eta)^{-1} k \sqrt{|k|}$ where
  $\eta>0$ is a small parameter. We did not have studied yet the behavior of the solutions
  when $\eta \to 0$.
\end{enumerate} 
\begin{theorem} 
  \label{thm:existence_NSTKE} 
  Let be given $\vv_0\in V$, $\fv \in L^2(0,T;H^{-1/2}(\Omega)^3)$ and $0 \leq k_0 \in
  L^1(\Om)$. Assume that $\nut$ and $\mut$ are given by~\eqref{eq:form_of_nut}
  and~\eqref{eq:form_of_mut}. Then there exists $(\vv, k)$ such that:
\begin{enumerate} 
\item The vector field $\vv$ is a regular-weak solution to the subsystem
  [(\ref{eq:Gen-VoigtNSE_NSTKE})-(i)-(ii)-(iii)-(iv))],
\item The scalar field $k$ verifies 
\begin{equation*} 
  k \in L^\infty (0,T; L^1(\Om)), \quad k \in \bigcap_{1 < p < 5/4}  L^p(0,T; W^{1,p}(\Om)) = K_{5/4}, 
\end{equation*}
and is solution of the subsystem [(\ref{eq:Gen-VoigtNSE_NSTKE})-(v)-(vi)-(vii))] in the
sense of the distribution in $Q_T$. Moreover, $k \ge 0$ {\sl a.e.} in $Q_T$.  
\end{enumerate} 
\end{theorem}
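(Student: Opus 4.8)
The plan is to decouple the system into the Voigt equation for $\vv$ and the scalar parabolic equation for $k$, to build an iterative scheme, and then to pass to the limit using the compactness Lemma~\ref{thm:compact}, which is tailored precisely to the quadratic source $\nut(k)|D\vv|^2$. Starting from $k^0 := k_0$, I would construct $(\vv^n, k^n)$ inductively. Given $k^{n-1}$, set the eddy viscosity $\nut^n := \ell\,T_N(\sqrt{|k^{n-1}|})$; by the truncation it satisfies $0 \le \nut^n \le N\max_{\overline{\Om}}\ell$ uniformly in $n$, so Theorem~\ref{thm:existence} and Remark~\ref{rem:generalisation} yield a unique regular-weak solution $\vv^n$ of the subsystem~(\ref{eq:Gen-VoigtNSE_NSTKE})-(i)--(iv) with viscosity $\nut^n$. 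Then, given $\vv^n$, I would solve the scalar problem
\begin{equation*}
  k^n_t + \div(\vv^n k^n) - \div\bigl(\mut(k^{n-1})\,\g k^n\bigr) = \nut(k^{n-1})\,|D\vv^n|^2 - (\ell+\eta)^{-1} k^n \sqrt{|k^n|},
\end{equation*}
with $k^n|_\Ga = 0$ and $k^n_{t=0} = k_0$. Since $D\vv^n \in L^\infty(0,T;L^2(\Om)^9)$ and $\nut(k^{n-1})$ is bounded, the source lies in $L^\infty(0,T;L^1(\Om)) \subset L^1(Q_T)$; hence this is a quasilinear parabolic equation with merely $L^1$ data, bounded diffusion and transport coefficients, and a monotone zeroth-order absorption term, solvable in the renormalized sense with the Boccardo--Gallou\"et regularity $k^n \in K_{5/4}$ (see~\cite{CL14, MR1418142}).

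Second, I would collect uniform a priori bounds. For $\vv^n$ the estimates of Lemma~\ref{lem:estimates} hold uniformly in $n$, because the bound $N\max_{\overline{\Om}}\ell$ on $\nut^n$ is $n$-independent; thus $\vv^n$ is bounded in $L^\infty(0,T;V) \cap W^{1,2}(0,T;V_{1/2})$ and $D\vv^n$ is bounded in $L^\infty(0,T;L^2(\Om)^9)$. For $k^n$, integrating the equation over $\Om$ gives the $L^\infty(0,T;L^1(\Om))$ bound (the convective and diffusive terms vanish, the absorption has a favorable sign, and the total source is controlled by the $\vv^n$-energy), while the Boccardo--Gallou\"et estimate yields the uniform $K_{5/4}$ bound; a companion bound on $k^n_t$ in a negative-order space makes the pair amenable to Aubin--Lions compactness. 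Positivity $k^n \ge 0$ follows by testing with $\max(-k^n,0)$: with $k_0 \ge 0$, a nonnegative source, a divergence-free transport, and the absorption carrying the right sign, the negative part vanishes.

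Third, I would pass to the limit. As in the proof of Theorem~\ref{thm:existence}, the $\vv^n$-bounds give strong convergence $\vv^n \to \vv$ in $L^2(0,T;V_{3/4})$, hence $D\vv^n \rightharpoonup D\vv$ in $L^2(Q_T)$; the $k^n$-bounds give, up to a subsequence, $k^n \to k$ strongly in $L^p(Q_T)$ for $p<5/4$ and a.e.\ in $Q_T$, with $k \ge 0$ and $k \in L^\infty(0,T;L^1(\Om)) \cap K_{5/4}$. The convective term passes by strong convergence of both factors, the lagged nonlinear diffusion $\mut(k^{n-1})\g k^n$ passes by a weak--strong argument (the coefficient converges strongly in every $L^q$, the gradient weakly in $L^p$), and the absorption $(\ell+\eta)^{-1}k^n\sqrt{|k^n|}$ converges in $L^1$ by the parabolic embedding of $K_{5/4}$ and equi-integrability. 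Crucially, since $k^{n-1} \to k$ a.e.\ and $\nut^n = \ell\,T_N(\sqrt{|k^{n-1}|}) \to \nut(k)$ a.e.\ while staying bounded by $N\max_{\overline{\Om}}\ell$, the sequence $(\nut^n)$ meets the hypotheses of Lemma~\ref{thm:compact}: its first conclusion identifies $\vv$ as the regular-weak solution of the limit Voigt problem, proving assertion~1, and its second conclusion~\eqref{eq:conv_energy} gives, for every test function $\varphi$,
\begin{equation*}
  \int_0^T\!\!\int_\Om \nut(k^{n-1})\,|D\vv^n|^2\,\varphi \, d\x\,dt \longrightarrow \int_0^T\!\!\int_\Om \nut(k)\,|D\vv|^2\,\varphi \, d\x\,dt,
\end{equation*}
so that $(\vv,k)$ solves the full system~\eqref{eq:Gen-VoigtNSE_NSTKE} distributionally.

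The heart of the matter, and the reason Lemma~\ref{thm:compact} is indispensable, is precisely this passage to the limit in $\nut(k^{n-1})|D\vv^n|^2$. Weak convergence of $D\vv^n$ alone only yields, by lower semicontinuity, $\nut(k)|D\vv|^2 \le \liminf \nut(k^{n-1})|D\vv^n|^2$ in the sense of measures, i.e.\ a mere variational inequality for $k$, as in~\cite{CL14, MR1418142}; the strong (measure) convergence of the energy supplied by Lemma~\ref{thm:compact} is exactly what upgrades this to the required equality. I expect the secondary difficulty to be the $L^1$-data parabolic theory for $k$: securing the $K_{5/4}$ regularity, identifying the nonlinear terms in the renormalized framework, and making the positivity argument rigorous for solutions that are only weakly differentiable.
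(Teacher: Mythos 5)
Your proposal is correct in its architecture and identifies exactly the point on which the paper's proof turns: the iteration between the Voigt subsystem and the scalar TKE equation, and the use of Lemma~\ref{thm:compact} to upgrade the mere lower-semicontinuity inequality for $\nut(k^{n-1})|D\vv^n|^2$ to convergence in the sense of measures, which is what yields an equation rather than a variational inequality for $k$. The route differs from the paper's in how the approximate scalar problems are handled. The paper truncates the quadratic source at height $n$, writing $T_n(\nut(k)|D\vv|^2)$, and also truncates the initial datum to $T_n(k_0)$; each regularized system~\eqref{eq:Gen-VoigtNSE_NSTKE_nn} then has bounded data, so the $k$-problem is solved in the energy space $L^2(0,T;H^1_0(\Om))\cap L^\infty(0,T;L^2(\Om))$ with $k_t\in L^2(0,T;H^{-1}(\Om))$, the coupled truncated system is closed by a Leray--Schauder fixed point (the paper keeps $\nut(k^n)$ and $\mut(k^n)$, not lagged, in the $k^n$-equation), and all the $L^1$ difficulties --- the Boccardo--Gallou\"et bounds, the convergences~\eqref{eq:convergence_k}, the $K_{5/4}$ regularity --- are deferred to the single final limit $n\to\infty$, where the truncation is removed simultaneously. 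You instead never truncate: you lag the coefficients to $k^{n-1}$ so that each step is linear in the principal part and needs no fixed point, but you must then invoke the $L^1$-data (renormalized) parabolic theory at \emph{every} step of the iteration, not just in the limit. Both schemes converge to the same limit system and both rely on the same compactness lemma; the paper's version buys entirely standard well-posedness of each approximate problem at the cost of one extra approximation parameter to remove, while yours is leaner but places the full weight of the $L^1$ theory (meaning of solutions, stability of the iteration, positivity for solutions that are only in $K_{5/4}$) on each iterate --- a burden you correctly flag as the residual difficulty. One caveat applies equally to both arguments and is not held against you: the diffusion $\mut(k)=C\ell\,T_{N'}(\sqrt{|k|})$ degenerates where $\ell$ or $k$ vanishes, so the Boccardo--Gallou\"et machinery invoked via \cite{CL14, MR1418142} is not literally off-the-shelf; the paper glosses over this in the same way.
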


\begin{proof} System~\eqref{eq:Gen-VoigtNSE_NSTKE} is very close to that studied
  in~\cite[Chapter 8]{CL14}. Therefore, we only indicate the changes in the proof of
  existence, without giving full details, which can be easily filled by the reader. The
  main difference is the result of the compactness Lemma~\ref{thm:compact}, which is
  essential to the proof. The further (compared to the previously studied systems)
  regularity enforced by the generalized Voigt term is the key to prove the existence
  results for the full NSTKE model.
  \medskip

  The issue is due to the quadratic source term $\nut(k) | D \vv |^2$ in the TKE equation,
  which is \textit{ a priori} in $L^1(Q_T)$ and not better.  To overcome this, we truncate
  this term as well as the initial data at height $n\in \N$, leading to the following
  regularized system:
\begin{equation} 
  \label{eq:Gen-VoigtNSE_NSTKE_nn} 
  \left\{ 
    \begin{array}{ll} 
      \vv_t  - \alpha \div (\ell \,  D \vv_t) +(\vv \cdot \nabla)\, \vv - \nu \Delta \vv -
      \div (\nut (k) D \vv)   + \nabla p  = {\bf f}, & (i)
      \\ 
      \div \vv = 0, & (ii)
      \\ 
      \vv\vert_\Ga = 0, &   (iii)
      \\ 
      \vv_{t=0} = \vv_0, &   (iv) 
      \\
      k_t + \vv \cdot \g k - \div( \mut (k) \g k) =  T_n (\nut(k) | D \vv |^2 ) -
      (\ell+\eta)^{-1} k \sqrt{|k|}, &    (v) 
      \\
      k \vert_\Ga = 0, &    (vi) 
      \\
      k_{t=0} = T_n(k_0).  &   (vii)
      \end{array} 
  \right. 
\end{equation}
For a given $\widetilde k \in L^2(0,T; \huo) \cap L^\infty (0,T; L^2(\Om))$, let $\vv = \vv(\widetilde k)$ be
the unique regular-weak solution to the subsystem
[(\ref{eq:Gen-VoigtNSE_NSTKE_nn})-(i)-(ii)-(iii)-(iv))] with $\nut (k)$ is replaced by $\nut (\widetilde k)$, so that the problem reduces to
analyze the equation for $k$, considering 
\begin{equation} 
  \label{eq:k_NSTKE_n} 
  \left\{ 
    \begin{array}{l} 
      k_t + \vv(\widetilde k) \cdot \g k - \div( \mut (k) \g k) =  T_n (\nut(k) | D
      \vv(\widetilde k) |^2 )-(\ell+\eta)^{-1} k \sqrt{|k|},   
      \\
      k \vert_\Ga = 0,     
      \\
      k_{t=0} = T_n(k_0),    
    \end{array} 
  \right. 
\end{equation}
which is a non linear parabolic equation with coefficients and a source term smooth enough
which can be easily handled. The existence of a weak solution $k \in L^2(0,T; \huo) \cap
L^\infty (0,T; L^2(\Om))$ to Problem~\eqref{eq:k_NSTKE_n}, it is easily proved and, in
addition, it follows $k_t \in L^2(0,T; H^{-1} (\Om))$.
\medskip

Finally the full system~\eqref{eq:Gen-VoigtNSE_NSTKE_nn} can be solved by
  another application of the Leray-Schauder fixed point theorem (see~\cite{CL14}) and
  again we do not know if the resulting solution is unique. We iteratively construct the solution
  starting from $k^0\equiv0$ and $\vv^{0}$ the corresponding solution of the
  [(\ref{eq:Gen-VoigtNSE_NSTKE})-(i)-(ii)-(iii)-(iv))], with $\nut=\nut(0)$. Then we
  iteratively construct the sequence  of solutions along the  following iterative scheme
  \medskip

\begin{equation} 
  \label{eq:Gen-VoigtNSE_NSTKE_n} 
  \left \{ 
    \begin{array}{l} 
      \vv_t^n  - \alpha \div (\ell \,  D \vv^n_t) + (\vv^n \cdot \nabla)\, \vv^n - \nu
      \Delta \vv^n - \div (\nut (k^n) D \vv^n)   + \nabla p^n  = {\bf f},  
      \\ 
      \div \vv^n = 0, 
      \\ 
      \vv^n\vert_\Ga = 0, 
      \\ 
      \vv^n_{t=0} = \vv_0,  
      \\
      k^n_t + \vv^{n-1} \cdot \g k^n - \div( \mut (k^n) \g k^n) =  T_n (\nut(k^n) | D \vv^{n-1}
      |^2 ) -  (\ell+\eta)^{-1} k^n \sqrt{|k^n|},   
      \\
      k^n \vert_\Ga = 0, 
      \\
      k^n_{t=0} = T_n( k_0). 
     \end{array} \right.  
\end{equation}
We are left
to pass to the limit in the above system and 
we know from~\cite[Chapter 8]{CL14} that,  up to a sub-sequence, 
\begin{equation}
  \label{eq:convergence_k} 
\left \{ 
  \begin{aligned}
    k^n &\overset{}{\rightharpoonup} k \qquad \,\, \text{in} \quad
    L^q(0,T;W_0^{1,q})\qquad \text{ for all } 1\leq q < 5/4,
    \\
    k^n_t &\overset{}{\rightharpoonup} k_t \qquad \text{in} \quad L^q(0,T;W^{-1,q})\qquad 
    \text{ for all } 1 \leq q < 5/4,
    \\
    k^n &\overset{}{\rightarrow} k \qquad \,\,\text{in} \quad L^q(Q_T) \qquad \text{ for
      all } 1\leq q < 29/14 \quad \text{and {\sl a.e.} in } Q_T.
     \end{aligned} \right.
\end{equation}
As $x \to \ell T_N \sqrt {|x|}$ is a continuous function over $\R$, $\nut^n = \nut (k^n)
\to \nut = \nut(k)$ {\sl a.e.} in $Q_T$, and because $\ell \in C^1({\overline \Om})$, we
also have $0 \le \nut \le N \| \ell \|_{\infty}$, showing that $(\nut^n)_{n \in \N}$
verifies all the requirements of Lemma~\ref{thm:compact}, by~\eqref{eq:form_of_nut}
and~\eqref{eq:convergence_k}. Therefore, $\vv^n \to \vv = \vv(k)$, the corresponding
regular-weak solution to the subsystem [(\ref{eq:Gen-VoigtNSE_NSTKE_n}), (i), (ii), (iii),
(iv))]. Passing to the limit in the equation for $k$ follows what is done in~\cite[Chapter
8]{CL14}, except about the quadratic source term. In this case, things are much better
since, according to Lemma~\ref{thm:compact}, $T_n (\nut(k^n) | D \vv^n |^2 ) \to \nut(k) |
D \vv |^2 )$ in the sense of measures. Finally, since the presence of the truncation
function obviously does not affect~\eqref{eq:conv_energy}, this ends the proof.
\end{proof}

\end{document}